\def\NZQ{\Bbb}               
\def\NN{{\NZQ N}}
\def\QQ{{\NZQ Q}}
\def\ZZ{{\NZQ Z}}
\def\PP{{\NZQ P}}
\def\frk{\frak}               
\def\Phi{{\frk n}}
\def\Phi{{\frk N}}
\def\ub{{\bold u}}
\def\vb{{\bold v}}
\def\ab{{\bold a}}
\def\bb{{\bold b}}
\def\cb{{\bold c}}
\def\opn#1#2{\def#1{\operatorname{#2}}} 
\opn\chara{char} \opn\length{\ell} \opn\pd{pd} \opn\rk{rk}
\opn\projdim{proj\,dim} \opn\injdim{inj\,dim} \opn\rank{rank}
\opn\depth{depth} \opn\grade{grade} \opn\height{height}
\opn\embdim{emb\,dim} \opn\codim{codim} \opn\sgn{sgn}  \opn\GLM{GLM}
\opn\Tr{Tr} \opn\bigrank{big\,rank}
\opn\superheight{superheight}\opn\lcm{lcm}
\opn\trdeg{tr\,deg}
\opn\reg{reg} \opn\lreg{lreg} \opn\ini{in} \opn\lpd{lpd}
\opn\size{size}\opn\bigsize{bigsize}
\opn\cosize{cosize}\opn\bigcosize{bigcosize}
\opn\sdepth{sdepth}\opn\sreg{sreg}
\opn\link{link}\opn\fdepth{fdepth}
\opn\div{div} \opn\Div{Div} \opn\cl{cl} \opn\Cl{Cl} \opn\Cor{Cor}
\opn\Spec{Spec} \opn\Supp{Supp} \opn\supp{supp} \opn\Sing{Sing}
\opn\Ass{Ass} \opn\Min{Min}\opn\Mon{Mon} \opn\dstab{dstab} \opn\astab{astab}
\opn\Ann{Ann} \opn\Rad{Rad} \opn\Soc{Soc} \opn\Gr{Gr}
\opn\Im{Im} \opn\Ker{Ker} \opn\Coker{Coker} \opn\Am{Am}
\opn\Hom{Hom} \opn\Tor{Tor} \opn\Ext{Ext} \opn\End{End}
\opn\Aut{Aut} \opn\id{id} \opn\span{span}
\opn\nat{nat}
\opn\pff{pf}
\opn\Pf{Pf} \opn\GL{GL} \opn\SL{SL} \opn\mod{mod} \opn\ord{ord}
\opn\Gin{Gin} \opn\Hilb{Hilb}\opn\sort{sort}
\opn\aff{aff} \opn\con{conv} \opn\relint{relint} \opn\st{st}
\opn\lk{lk} \opn\cn{cn} \opn\core{core} \opn\vol{vol}
\opn\link{link} \opn\star{star}\opn\lex{lex} \opn\Gr{Gr}
\opn\gr{gr}
\def\pot#1#2{#1[\kern-0.28ex[#2]\kern-0.28ex]}
\opn\dirlim{\underrightarrow{\lim}}
\opn\inivlim{\underleftarrow{\lim}}
\def\Implies{\ifmmode\Longrightarrow \else
        \unskip${}\Longrightarrow{}$\ignorespaces\fi}
\def\implies{\ifmmode\Rightarrow \else
        \unskip${}\Rightarrow{}$\ignorespaces\fi}
\def\iff{\ifmmode\Longleftrightarrow \else
        \unskip${}\Longleftrightarrow{}$\ignorespaces\fi}
\newtheorem{Theorem}{Theorem}[section]
\newtheorem{thm}[Theorem]{Theorem}
\newtheorem{lem}[Theorem]{Lemma}
\newtheorem{ex}[Theorem]{Example}
\newtheorem{defi}[Theorem]{Definition}
\begin{document}

\title{ Self-dual toric varieties }

\author{Apostolos Thoma, Marius Vladoiu}
\thanks{}
\address{Apostolos Thoma, Department of Mathematics, University of Ioannina, Ioannina 45110, Greece} 
\email{athoma@uoi.gr}
\address{Marius Vladoiu, Faculty of Mathematics and Computer Science, University of Bucharest, Str. Academiei 14, Bucharest, RO-010014, Romania}
\email{vladoiu@fmi.unibuc.ro}
\keywords{toric varieties, self dual, Gale transforms, Markov basis, Graver basis, Gr\"obner basis}
\subjclass{14M25, 13P10, 52B35, 14N05}
\thanks{Corresponding author: Apostolos Thoma}

\begin{abstract}  
We describe explicitly all multisets of weights whose defining projective toric varieties are self-dual. 
In addition, we describe a remarkable and unexpected combinatorial behaviour of the defining ideals of these varieties. The toric ideal 
of a self-dual projective variety is weakly robust, that means the Graver basis is the union of 
all minimal binomial generating sets.  When, in addition, the self-dual projective variety has a non-pyramidal configuration, then the toric ideal is strongly robust, namely
 the Graver basis is a minimal generating set,
therefore there is only one minimal binomial generating set which is also a reduced Gr\"obner basis with respect to every monomial
order and thus, equals the universal Gr\"obner basis.
\end{abstract}
\maketitle

\section{Introduction}

Let $K$ be an algebraically closed field of characteristic zero and $ A$ be an $m\times n$ integer matrix such
that $(1,1,\cdots, 1,1)$ belongs in the row span of the matrix $A$. For a vector 
${\bf u}\in\Ker_{\ZZ}(A)$, we denote 
by ${\bf u}^+, {\bf u}^-\in \NN^n$, the unique
vectors  of disjoint support such that ${\bf u}={\bf u}^+-{\bf u}^-$. The support of ${\bf u}$ is the set $\{i | u_i\neq 0\}$. The toric ideal of $A$ is the 
homogeneous ideal $I_A\subset K[x_1,\ldots,x_n]$ generated by the binomials ${x}^{{\ub}^+}-{x}^{{\ub}^-}$ 
 where ${\ub}\in\Ker_{\ZZ}(A),$ see \cite[Chapter 4]{St}. Let $T=(K^*)^m$ be the algebraic torus of rank $m$ over $K$  and $\chi (T)$ be the lattice of characters of $T$;  characters are of the form $\lambda ({\bf t}) = {\bf t}^{\bf b} = t_1^{b_1}t_2^{b_2}\cdots t_m^{b_m}$, where ${\bf b}\in \ZZ^m$, thus we can identify  $\chi (T)$ with $\ZZ^m$.
 Then $T$ acts on $K^n$ by ${\bf t}\cdot (x_1, x_2, \ldots ,x_n)=({\bf t}^{{\bf a}_1}x_1, {\bf t}^{{\bf a}_2}x_2, \ldots , {\bf t}^{{\bf a}_n}x_n)$, where the set of weights ${\bf a}_1, {\bf a}_2, \ldots ,{\bf a}_n $ are the columns of the matrix $A$. The projective toric variety $X_A=V(I_A)\subset \PP_{K}^{n-1}$ is the Zariski closure of the orbit of the point $(1,1,\ldots, 1,1)$.
By $X_A^*$ we denote the dual variety of $X_A$,  which is the closure  in the dual projective space  of the hyperplanes intersecting the regular points of $X_A$ non transversally. The variety $X_A$ is called self-dual if it is isomorphic to its dual $X_A^*$ as embedded projective varieties. 
In \cite{BDR},  Bourel, Dickenstein and Rittatore prove that the variety $X_A$ is a self-dual projective toric variety
if and only if $\dim(X_A)=\dim(X_A^*)$ and the smallest linear subspaces containing $X_A$ and $X_A^*$ have the same dimension. The variety 
$X_A$ is called defective if the codimension of its dual variety is not one. Thus self-dual toric varieties which are not hypersurfaces are defective. The classification of defective projective toric varieties is a difficult problem with a lot of recent interest, see \cite{D1, D2, D3, D4, D5}.

Also, in \cite{BDR}, Bourel et al. produce the full list of self-dual smooth projective toric varieties and several examples of non-smooth self dual toric vatieties. They also provide combinatorial characterizations of self-duality for toric varieties. 
In  this article, we combine the combinatorial characterizations of \cite{BDR} and the full bouquet theory, including the generalized Lawrence matrices \cite{PTV1}, which permit us to describe explicitly all multisets of weights whose defining projective toric varieties are self-dual and we prove their interesting algebro-combinatorial
properties.
Toric ideals of self-dual projective toric varieties associated to non-pyramidal configurations posess
the same combinatorial properties \cite[Proposition 7.1]{St} as the toric ideals of the second Lawrence liftings: they are {\em strongly robust},  see \cite{Sul}. A toric ideal is called robust if it is minimally generated by its universal Gr{\"o}bner basis, where the  universal Gr{\"o}bner basis is the union of all reduced   Gr{\"o}bner bases, see \cite{BR}.   Robust related properties of ideals are studied in several recent articles; see \cite{BZ, B, cng, CNG, SZ} for robust ideals, \cite{BR,  BBDLMNS} for robust toric ideals,
\cite{    GP, KTV, KTV2, PTV1, PTV2, St,  Sul} for strongly robust toric ideals and \cite{G-MT, T} for generalized robust toric ideals.

  A {\it strongly robust} toric ideal  is a toric ideal $I_A$ for which  the Graver basis $\Gr(I_A)$
is a minimal system of generators, see \cite{Sul}.  Since $(1,1,\cdots, 1,1)$ belongs to the row span of the matrix $A$, then $\Ker_{\ZZ}(A)\cap \NN^n=\{{\mathbf{0}}\}$, which implies that any minimal binomial generating set is contained in the Graver basis, see \cite[Theorem 2.3]{CTVAC}. Then, for strongly robust ideals,  the Graver basis is the unique minimal system of generators and, thus, any reduced Gr{\"o}bner basis as well as the universal Gr{\"o}bner basis are identical with the Graver basis, since all of them contain a minimal system of generators and they are  subsets of the Graver basis (see \cite[Chapter 4]{St}). We conclude that for a strongly robust toric ideal $I_A$ the 
following sets are identical: the set of indispensable elements, any minimal system of binomial generators, any reduced Gr{\"o}bner basis, the universal Gr{\"o}bner basis and the Graver basis. Therefore, strongly robust toric ideals are robust. The set of indispensable elements is the intersection of all minimal binomial sets of generators, while their union is the universal Markov basis, up to the identification of opposite binomials. {\em Weakly robust} are the toric ideals for which the universal Markov basis and the Graver basis of $I_A$ coincide. The ideals of self-dual toric varieties, pyramidal or non-pyramidal, are weakly robust.

In Section 2, we present the basic theory of bouquets.  In Section 3, we make an  introduction to the generalized Lawrence matrices and we use them to
describe explicitly all multisets of weights whose defining projective toric varieties are self-dual. Finally, in Section 4, we present the necessary notions from the different toric bases and 
 we prove that the toric ideal 
of a self-dual projective variety is weakly robust. When, in addition, the self-dual projective variety is defined by  a non-pyramidal configuration, then the toric ideal is strongly robust.

\section{Preliminaries on bouquets} \label{Section 2}
  
Let $\{{\bf a}_1,\dots, {\bf a}_n\}\subset \ZZ^m$.  Throughout the article we will use the same symbol, $A$, for the set  $\{{\bf a}_1,\dots, {\bf a}_n\}$ and for the integer matrix $[{\bf a}_1,\dots, {\bf a}_n]\in\ZZ^{m\times n}$  with columns ${\bf a}_1,\dots, {\bf a}_n$.

 To any  integer matrix $A\in\ZZ^{m\times n}$ 
one can associate an oriented matroid structure consisting of a graph $G_A$ and another matrix $A_B$, called the  ``bouquet graph'' and the ``bouquet matrix'', respectively. The name bouquet was inspired by the article \cite{PStasi} and the theory of bouquets was developed in a series of articles  \cite{PTV1, PTV2, KTV}. But a lot of the ideas behind the bouquet structure were used extensively before the definition   to study the self-dual projective toric varieties in \cite{BDR}.

    The {\em bouquet graph} $G_A$ of $I_A$ is the graph on the set of vertices $\{{\bf a}_1,\dots, {\bf a}_n\}$ and  edges $\{\ab_i,\ab_j\}$  if there exists a vector $\vb\in\ZZ^m$ such that  the dot products $\vb\cdot \ab_k=0$ for all $k\neq i,j$ and $\vb\cdot\ab_i\neq  0$, $\vb\cdot\ab_j\neq 0$ or equivalently that there is a covector with support $i,j$ in the vector matroid defined by the columns of $A$, see \cite[Definition 1.1]{PTV1}.  The {\it bouquets} are the connected components of the graph $G_A$. 

The bouquet graph  can also be  defined in terms of the Gale transform of the matrix $A$.  To any matrix $A$ one can associate its Gale transform, which is the $n\times (n-r)$ matrix whose columns span the lattice $\Ker_{\ZZ}(A)$, where $r$ is the rank of $A$. We will denote the set of ordered row vectors of the Gale transform of $A$ by $\{G(\ab_1), \dots, G(\ab_n)\}$ and we will call  $G(\ab_i)$ the Gale transform of  $\ab_i$. There is an edge between the vertices $\ab_i$ and $\ab_j$ if and only if  $\ab_i$, $\ab_j$ are  coparallel, that means $G(\ab_i), G(\ab_j)$ are parallel i.e.  there exists a nonzero $\lambda\in\QQ$ such that $G(\ab_i)=\lambda G(\ab_j)$.   

The vector ${\bf a}_i$ is called {\em free} if $G(\ab_i)$ is equal to the zero vector, which means that $i$ is not contained in the support of any element in $\Ker_{\mathbb{Z}}(A)$. It follows from the definition that the free vectors of $A$ form one bouquet, which we call the {\em free bouquet} of $G_A$.  If the configuration of weights ${\bf a}_1, {\bf a}_2, \cdots ,{\bf a}_n $ has no free vectors then it is called non-pyramidal.
If the configuration has $k$ free vectors is called $k$-pyramidal. 
 The non-free bouquets are of two types: {\em mixed} and {\em non-mixed}. A non-free bouquet is mixed if contains an edge $\{\ab_i,\ab_j\}$ such that $G(\ab_i)=\lambda G(\ab_j)$ for some $\lambda<0$, and is non-mixed if it is either an isolated vertex or for all of its edges $\{\ab_i,\ab_j\}$ we have $G(\ab_i)=\lambda G(\ab_j)$ with $\lambda>0$, see \cite[Lemma 1.2]{PTV1}.  

 Let $B_1,\ldots,B_s$ be the bouquets of $A$. Then we may rearrange the column vectors in such a way  that there exist positive integers $i_1, i_2, \ldots, i_s$ such that the first $i_1$ column vectors $\ab_1,\ldots,\ab_{i_1}$ belong to the first bouquet $B_1$, the next $i_2$ column vectors belong to the second bouquet $B_2$, and so on the last $i_s$ vectors belong to the $s$-th bouquet $B_s$, thus in particular $i_1+\cdots+i_s=n$.

Next, we recall from \cite{PTV1}, the definition of the \emph{bouquet-index-encoding vector} $\cb_B$.
If the bouquet $B$ is free then we set $\cb_B\in\ZZ^n$ to be any nonzero vector such that
$\supp(\cb_B)=\{i| \ab_i\in B\}$ and with the property that the first nonzero component is positive.
For a  non-free bouquet $B$ of $A$, consider the Gale transforms of the elements in $B$.
 All the elements are nonzero and pairwise linearly dependent, therefore there exists at least one nonzero component $j$ in all of them. Let $g_j=\gcd(G({\bf a}_i)_j| \ {\bf a}_i\in B)$ and fix the smallest
integer $\ell$ such that ${\bf a}_{\ell}\in B$. Let ${\bf c}_B$  be the vector in $\ZZ^n$ whose $i$-th component
is $0$ if ${\bf a}_i\notin B$, and is $\varepsilon_{\ell j}G({\bf a}_i)_j/g_j$ if ${\bf a}_i \in B$, where   $G(\ab_i)_j$ represents the $j$-th component of the vector $G(\ab_i)$ and $\varepsilon_{\ell j}$  the sign of the integer $G({\bf a}_{\ell})_j$. Note that the choice of $\ell$ implies that the first nonzero component of ${\bf c}_B$ is positive. Division by $g_j$ makes the gcd of the components of ${\bf c}_B$ to be one.
 Since each ${\bf a}_i$ belongs to  exactly one bouquet the supports of the vectors ${\bf c}_{B_i}$ are pairwise disjoint. In addition, $\cup_i\supp(\cb_{B_i})=[n]$, and taking into account that the first $i_1$ vectors are in $B_1$, the next $i_2$ in $B_2$, and so forth, then $\cb_{B_1}$ has only the first $i_1$ components nonzero, $\cb_{B_2}$ has only the following $i_2$ components nonzero, and so forth. Therefore, we may label the nonzero components of $\cb_{B_1},\ldots,\cb_{B_s}$ as follows:  the nonzero components of $\cb_{B_q}$ are $c_{q1},\ldots,c_{qi_q}$, for all $1\leq q\leq s$. 

The structure of a non-free bouquet $B$ of $A$, that is mixed or non-mixed, can be detected from the vector $\cb_B$. If $B_i$ is a  non-free bouquet of $A$, then $B_i$ is a mixed bouquet if and only if the vector ${\bf c}_{B_i}$ has a negative and a positive component, and  $B_i$ is non-mixed if and only if the vector $\cb_{B_i}$ has all nonzero components positive, see \cite[Lemma 1.6]{PTV1}.

For each bouquet $B$ we define the following vector: 
 $$\ab_B=\sum_{j=1}^n (c_B)_j\ab_j\in\ZZ^m,$$
where $(c_B)_j$ is the $j$-th component of the vector $\cb_B$. Note that the ${\bf a}_j$'s that contribute in fact to the vector ${\bf a}_B$ are those belonging to the bouquet $B$. The matrix $A_B$ whose column vectors are the vectors $\ab_{B_1},\ab_{B_2},\ldots, \ab_{B_s}$,
corresponding to the bouquets of $A$, is called the {\em bouquet matrix} of $A$.

Finally, we recall (see \cite[Theorem 1.9]{PTV1}) that the matrix $A_B\in\ZZ^{m\times s}$ has its kernel $\Ker_{\ZZ}(A_B)$ isomorphic to $\Ker_{\ZZ}(A)$ via the isomorphism $D:\Ker_{\ZZ}(A_B)\mapsto\Ker_{\ZZ}(A)$ given by
\[
D(u_1,\ldots,u_q)=(c_{11}u_1,\ldots,c_{1i_1}u_1,c_{21}u_2,\ldots,c_{2i_2}u_2,\ldots,c_{s1}u_s,\ldots,c_{si_s}u_s).
\]
Note that the vector $D({\bf u})\in \Ker_{\ZZ}(A)$ has $n$ components, which we will denote by  $u_{jt}$ in the rest of the paper, where $1\leq j \leq s$, $1\leq t\leq i_j$.

The following theorem describes the connection between the toric ideals of $A$ and  $A_B$. By  $\mathcal C(A)$
we denote the {\em circuits} of $A$, which consists of all  nonzero vectors ${\bf{u}}\in\Ker_{\ZZ}(A)$ with minimal support and the gcd of their components equal to one. While by  $\Gr(A)$ we denote the {\em Graver basis} of  $A$, which consists of all  nonzero vectors ${\bf{u}}\in\Ker_{\ZZ}(A)$ with no proper conformal decomposition,  see \cite[Section 4]{St}. We say that ${\bf u}={\bf v}+_{c} {\bf w}$ is a conformal decomposition of the vector ${\bf u}$ in two vectors {\bf{v}}, {\bf{w}}, if ${\bf u}={\bf v}+{\bf w}$ and ${{\bf u}}^+={{\bf v}}^++{{\bf w}}^+, {{\bf u}}^-={{\bf v}}^-+{{\bf w}}^-$. We call the decomposition proper if both ${\bf{ v}}$ and ${\bf{ w}}$ are nonzero. 

\begin{thm}[{\cite[Theorems 1.9 and 1.11]{PTV1}}]\label{all_is_well}
Let $A=[{\bf a}_1,\dots,{\bf a}_n]\in\ZZ^{d\times n}$ and its bouquet matrix $A_B=[\ab_{B_1},\dots,\ab_{B_s}]$. There is a bijective correspondence between the elements of $\Ker_{\ZZ}(A_B)$ $(\Gr(A_B)$ and $\mathcal C(A_B)$, respectively) and the elements of $\Ker_{\ZZ}(A)$ $(\Gr(A)$ and $\mathcal C(A)$, respectively). More precisely, this correspondence is defined as follows: if ${\bf u}=(u_1,\ldots,u_{s})\in\Ker_{\ZZ}(A_B)$, then $D({\bf u})={\bf c}_{B_1}u_1+\cdots+{\bf c}_{B_s}u_s\in\Ker_{\ZZ}(A)$.
\end{thm}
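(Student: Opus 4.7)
My plan is to verify the bijection $D$ in stages --- well-definedness, injectivity, surjectivity --- and then transport the Graver basis and circuit structures across it.

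\emph{Well-definedness and injectivity} are routine. For $\mathbf{u}=(u_1,\ldots,u_s)\in\Ker_{\ZZ}(A_B)$, direct expansion gives $A\cdot D(\mathbf{u})=\sum_j u_j\sum_t c_{jt}\mathbf{a}_{jt}=\sum_j u_j\mathbf{a}_{B_j}=0$, and injectivity follows because each component $c_{jt}$ with $1\le t\le i_j$ is nonzero. The crucial step is \emph{surjectivity}. Given $\mathbf{w}\in\Ker_{\ZZ}(A)$, write $\mathbf{w}=G\mathbf{v}$ via a Gale matrix $G$, so $w_i=G(\mathbf{a}_i)\cdot\mathbf{v}$. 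For $i,i'$ in a common non-free bouquet $B$, the rows $G(\mathbf{a}_i)$ and $G(\mathbf{a}_{i'})$ are parallel, and the proportionality constant $\lambda$ with $G(\mathbf{a}_i)=\lambda G(\mathbf{a}_{i'})$ equals $(c_B)_i/(c_B)_{i'}$ by construction of $\mathbf{c}_B$; hence $u_B:=w_i/(c_B)_i$ is a well-defined rational number, independent of $i\in B$. Integrality $u_B\in\ZZ$ then follows from coprimality of the components of $\mathbf{c}_B$: writing $u_B=p/q$ in lowest terms, $(c_B)_i u_B=w_i\in\ZZ$ forces $q\mid(c_B)_i$ for every $i\in B$, hence $q\mid 1$. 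For a free bouquet $B$ one has $w_i=0$ for every $\mathbf{a}_i\in B$ (which is the definition of free), so setting $u_B=0$ works. With $\mathbf{u}=(u_{B_1},\ldots,u_{B_s})$, one checks $D(\mathbf{u})=\mathbf{w}$ coordinate-wise, and then $A_B\mathbf{u}=A\cdot D(\mathbf{u})=0$.

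For the \emph{Graver correspondence}, push a proper conformal decomposition $D(\mathbf{u})=\mathbf{v}+_c\mathbf{w}$ in $\Ker_{\ZZ}(A)$ back through $D$ to $\mathbf{u}=\mathbf{v}'+\mathbf{w}'$, and verify conformality in $\Ker_{\ZZ}(A_B)$. At coordinate $(j,t)$ the hypothesis reads that $c_{jt}v'_j$ and $c_{jt}w'_j$ lie weakly in the sign class of $c_{jt}u_j$; dividing out the nonzero scalar $c_{jt}$ places $\sgn(v'_j)$ and $\sgn(w'_j)$ weakly in the sign class of $\sgn(u_j)$, independently of whether $B_j$ is mixed or non-mixed, because the factor $c_{jt}$ appears identically on both sides. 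The case $u_j=0$ is automatic since the zero constraint at any $(j,t)$ in the bouquet forces $v'_j=w'_j=0$. The reverse direction is immediate, and properness is preserved by injectivity of $D$. For the \emph{circuit correspondence}, observe that $\supp(D(\mathbf{u}))=\bigcup_{j:\,u_j\neq 0}\{i:\mathbf{a}_i\in B_j\}$ is always a union of complete bouquet-index sets; minimality therefore transfers in both directions, and the gcd of the components of $D(\mathbf{u})$ equals the gcd of $\mathbf{u}$ because each non-free $\mathbf{c}_{B_j}$ has coprime components.

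The main obstacle is the surjectivity step, and specifically the integrality claim $u_B\in\ZZ$: one must convert a geometric input (Gale rows parallel within a bouquet) into an arithmetic conclusion (a common rational proportionality constant is in fact an integer). This conversion relies decisively on the normalization $(c_B)_i=\varepsilon\, G(\mathbf{a}_i)_j/g_j$ in the definition of $\mathbf{c}_B$, which is precisely what makes the components of $\mathbf{c}_B$ coprime.
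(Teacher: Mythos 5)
Your proof is correct. The paper itself contains no proof of this statement---it is imported verbatim from \cite[Theorems 1.9 and 1.11]{PTV1}---but your argument (well-definedness by linearity, injectivity from the nonvanishing of the components of each $\cb_{B_j}$ on its support, surjectivity via parallelism of the Gale rows within a bouquet combined with the coprimality normalization of $\cb_B$ to force integrality, and coordinatewise transfer of conformal decompositions and of minimal supports and gcd's across $D$) is essentially the argument of that reference, so there is nothing to add.
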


\begin{lem}\label{non_pyramid_pres_bouq}
Let $A=\{\ab_1,\ldots,\ab_n\}$ be a  configuration and $A_B=\{\ab_{B_1},\ldots,\ab_{B_s}\}$ be its bouquet configuration. Then $A$ is non-pyramidal if and only if $A_B$ is non-pyramidal.
\end{lem}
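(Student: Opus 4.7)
The plan is to unpack both sides through the definition of a free vector and to invoke the bijection $D$ from Theorem \ref{all_is_well}. Recall that $\ab_j$ is free in $A$ precisely when $j$ does not belong to the support of any element of $\Ker_{\ZZ}(A)$, and analogously $\ab_{B_q}$ is free in $A_B$ precisely when $q$ does not appear in the support of any element of $\Ker_{\ZZ}(A_B)$. I will prove, bouquet by bouquet, that
\[
\ab_{B_q}\ \text{is free in}\ A_B \quad \iff \quad \ab_j\ \text{is free in}\ A\ \text{for every}\ j\in B_q.
\]
Because the free vectors of $A$, if any, form a single bouquet (the free bouquet), this per-bouquet equivalence translates directly into the statement of the lemma.

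The main ingredient is the observation that \emph{every} entry $c_{qt}$ of the bouquet-index-encoding vector $\cb_{B_q}$ indexed by $B_q$ is nonzero. For the free bouquet this is built into the definition, since $\cb_{B_q}$ is chosen with support exactly equal to $B_q$. For a non-free bouquet the Gale transforms of its members are nonzero and pairwise parallel, hence share the same nonzero coordinate positions, so the coordinate used in the construction of $\cb_{B_q}$ gives a nonzero integer for every member of $B_q$. Combined with the explicit formula
\[
D(\ub)_{qt} = c_{qt}\,u_q, \qquad 1\le q\le s,\ 1\le t\le i_q,
\]
from Theorem \ref{all_is_well}, this is all I will need.

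With this in hand, suppose first that $\ab_j$ is free in $A$ for some $j\in B_q$, and let $t$ be the position of $j$ within $B_q$. Then for every $\ub\in\Ker_{\ZZ}(A_B)$ the element $D(\ub)\in\Ker_{\ZZ}(A)$ vanishes at position $j$, so $c_{qt}u_q=0$, and the nonvanishing of $c_{qt}$ forces $u_q=0$; hence $\ab_{B_q}$ is free in $A_B$. Conversely, if $\ab_{B_q}$ is free in $A_B$, then $u_q=0$ for every $\ub\in\Ker_{\ZZ}(A_B)$, and via the bijection $D$ every element of $\Ker_{\ZZ}(A)$ has zero coordinates throughout $B_q$, so each $\ab_j$ with $j\in B_q$ is free in $A$. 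There is no genuinely hard step; the only minor subtlety is the slightly ad hoc definition of $\cb_{B_q}$ for the free bouquet, and recording at the outset that all its entries on $B_q$ are nonzero handles this case uniformly with the non-free one, making the two directions of the argument completely symmetric.
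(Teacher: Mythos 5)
Your proof is correct and follows essentially the same route as the paper: both arguments reduce the statement to the isomorphism $D$ of Theorem \ref{all_is_well} together with the fact that every component of $\cb_{B_q}$ indexed by $B_q$ is nonzero, so that $D(\ub)_{qt}=c_{qt}u_q$ vanishes exactly when $u_q$ does. Your per-bouquet formulation is a slightly cleaner packaging of the paper's two contrapositive arguments, but the underlying idea is identical.
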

\begin{proof} Suppose that $A$ is a non-pyramidal configuration and $A_B$ is pyramidal. Then there exists an ${{\bf a}_B}_j$ which is a free vector, thus $G({{\bf a}_B}_j)={\bf 0}$. From the definition of the Gale transform this means that every
element ${\bf u}$ in the  $\Ker_{\ZZ}(A_B)$ has the $j$-th component equal to $0$. Then, according to 
Theorem~\ref{all_is_well}, all elements $D({\bf u})$ in the $\Ker_{\ZZ}(A)$ have their components $u_{j1},\cdots, u_{ji_j}$ corresponding to the $B_j$ bouquet zero, since $u_{jt}=c_{jt}u_j=0$. Therefore all the Gale transforms
of the vectors of the bouquet $B_j$ are zero and thus the vectors are free, contradicting the fact that $A$ is non-pyramidal configuration. 

For the other direction, suppose that $A_B$ is a non-pyramidal configuration and $A$ is a pyramidal. Then there exists a free vector in $A$, thus its Gale transform is zero. Suppose that this free vector is the $l$-th one in the bouquet $B_j$. Then all
elements $D({\bf u})$ in the  $\Ker_{\ZZ}(A)$ have their $jl$-th component equal to $0$. According to 
Theorem~\ref{all_is_well}, the $jl$-th component is $c_{jl}u_j$. But $c_{jl}\not =0$ thus $u_j=0$, for all ${\bf u}\in \Ker_{\ZZ}(A_B)$. Therefore,  the Gale transform
of the vector ${{\bf a}_B}_{j}$ is zero and thus the vector  ${{\bf a}_B}_{j}$ is free, contradicting the fact that $A_B$ is a non-pyramidal configuration. \end{proof}

\section{Generalized  Lawrence matrices and Classification results}

Next we recall the definition of a generalized Lawrence matrix from \cite[Section 2]{PTV1}. 
Let $\{\ab'_1,\ldots,\ab'_s\}\subset\ZZ^d$ be a configuration and $\cb'_1,\ldots,\cb'_s$ be any set of primitive vectors, a {\em primitive} vector is an integer vector such that the gcd of
all its components is $1$, with $\cb'_i\in\ZZ^{m_i}$ for some $m_i\geq 1$, each having full support and a positive first component. We denote a generalized Lawrence matrix by $\GLM(\ab'_1,\ldots,\ab'_s|\cb'_1,\ldots,\cb'_s)$ and is defined as follows: for each $i=1,\dots,s$, let  $\cb'_i=(c'_{i1},\ldots,c'_{im_i})\in\ZZ^{m_i}$ and define 
\[
C_i=
\left( \begin{array}{ccccc}
 -c'_{i2} &  c'_{i1} &  0 & \ldots & 0  \\
 -c'_{i3} & 0 & c'_{i1} & \ldots & 0  \\
\vdots & \vdots & \vdots &  \ddots & \vdots \\
 -c'_{im_i} & 0 & 0 & \ldots  & c'_{i1}
\end{array} \right)
	 \in\ZZ^{(m_i-1)\times m_i}.
\]
	Primitivity of each $\cb'_i$ implies that there exist integers $\lambda_{i1},\ldots,\lambda_{im_i}$ 
	such that  $1=\lambda_{i1}c'_{i1}+\cdots+\lambda_{im_i}c'_{im_i}$. Fix a choice of $\lambda_{i1},\ldots,\lambda_{im_i}$, 
	and define the matrices $A_i=[\lambda_{i1}{\bf a}'_i,\ldots,\lambda_{im_i}{\bf a}'_i]\in\ZZ^{d\times m_i}$. 
	The generalized Lawrence matrix is then the following block matrix: 
\[
\GLM(\ab'_1,\ldots,\ab'_s|\cb'_1,\ldots,\cb'_s)=\
\left( \begin{array}{cccc}
 A_1 & \ A_2 & \cdots & \ A_s \\
 C_1 &\ 0\ & \cdots & 0 \\
 0 &\ C_2\ & \cdots & 0 \\
\vdots&\vdots & \ddots & \vdots \\
 0 &\ 0 & \cdots &\ C_s
\end{array} \right)  \in \ZZ^{p\times n}, 
\]
where $p=d+(m_1-1)+\cdots+(m_s-1)$ and $n=m_1+\cdots+m_s$, see \ref{example1} for an example of a generalized Lawrence matrix. 
Note that the definition of a generalized Lawrence matrix depends also on the choice of $\lambda_{ij}$, and thus there are infinitely many choices for $\GLM(\ab'_1,\ldots,\ab'_s|\cb'_1,\ldots,\cb'_s)$. However, all of them have the same kernel and thus  define the same toric variety,
see \cite[Section 2]{PTV1}.

\begin{thm}[{\cite[Theorem 2.1]{PTV1}}]\label{inverse_construction}
Let $\{\ab'_1,\ldots,\ab'_s\}\subset\ZZ^d$ be an arbitrary set of vectors. 
Let $\cb'_1,\ldots,\cb'_s$ be any set of primitive vectors, with $\cb'_i\in\ZZ^{m_i}$ for some $m_i\geq 1$, each having full support
and a positive first component. Then, any generalized Lawrence matrix  $A=\GLM(\ab'_1,\ldots,\ab'_s|\cb'_1,\ldots,\cb'_s)\in\ZZ^{p\times n}$ 
has a subbouquet decomposition, $B_1,\ldots,B_s$, such that the $i^{th}$ subbouquet is encoded by the following vectors:
$\ab_{B_i}=(\ab'_i,{\bf 0},\ldots,{\bf 0})\in\ZZ^{p}$ and $\cb_{B_i}=({\bf 0},\ldots,\cb'_i,\ldots,{\bf 0})\in\ZZ^{q}$, where $n=m_1+\cdots+m_s$, $p=d+q-s$ and
the support of $\cb_{B_i}$ is precisely  the $i^{th}$ block of $\ZZ^n$ of size $m_i$. 
\end{thm}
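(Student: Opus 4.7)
The plan is to start by computing $\Ker_{\ZZ}(A)$ explicitly, since the entire subbouquet structure is encoded in the kernel. Split any $\ub = (\ub_1, \ldots, \ub_s) \in \ZZ^n$ according to the block sizes $m_1, \ldots, m_s$. The rows coming from $C_i$ in $A$ impose the relations $c'_{i1} u_{i,k+1} = c'_{i,k+1} u_{i1}$ for $k = 1, \ldots, m_i - 1$, so $\ub_i$ is a rational multiple of $\cb'_i$; primitivity of $\cb'_i$ then upgrades this to $\ub_i = t_i \cb'_i$ for some $t_i \in \ZZ$. Substituting into the top $d$ rows and invoking the identity $\sum_{j=1}^{m_i} \lambda_{ij} c'_{ij} = 1$ from the definition of $\GLM$, I obtain
\[
0 = \sum_{i=1}^{s} \sum_{j=1}^{m_i} \lambda_{ij} u_{ij}\, \ab'_i = \sum_{i=1}^{s} t_i\, \ab'_i,
\]
so $(t_1, \ldots, t_s) \in \Ker_{\ZZ}([\ab'_1,\ldots,\ab'_s])$. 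This furnishes an explicit $\ZZ$-linear isomorphism $\Ker_{\ZZ}([\ab'_1,\ldots,\ab'_s]) \to \Ker_{\ZZ}(A)$ sending $(t_1,\ldots,t_s)$ to $(t_1 \cb'_1, \ldots, t_s \cb'_s)$, which matches the prescribed shape of the map $D$ in Theorem~\ref{all_is_well}.

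Next, I would apply this isomorphism to any $\ZZ$-basis of $\Ker_{\ZZ}([\ab'_1,\ldots,\ab'_s])$ to assemble a Gale transform $G$ of $A$. The key consequence is that the row of $G$ indexed by the $j$-th column of the $i$-th block has the form $c'_{ij}\,\vb_i$, where $\vb_i$ depends only on the block index $i$. Hence all columns inside a single block are coparallel, which shows that the $i$-th block is contained in a single bouquet of $A$, giving the claimed subbouquet decomposition $B_1, \ldots, B_s$.

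It remains to verify the two encoding formulas. Fix an index $j'$ where $\vb_i$ has a nonzero entry; the row scaling above, together with $\gcd(c'_{i1}, \ldots, c'_{im_i}) = 1$, yields $g_{j'} = |(\vb_i)_{j'}|$. Because $c'_{i1} > 0$ by hypothesis, the sign $\varepsilon_{\ell j'}$ (with $\ell$ the smallest index in $B_i$) exactly cancels the sign of $(\vb_i)_{j'}$, so $\cb_{B_i}$ takes the prescribed form $({\bf 0}, \ldots, \cb'_i, \ldots, {\bf 0})$. For the bouquet vector I expand $\ab_{B_i} = \sum_{j=1}^{m_i} c'_{ij}\, \ab_{(i,j)}$, where $\ab_{(i,j)}$ is the $j$-th column of block $i$: the top $d$ rows contribute $\bigl(\sum_j \lambda_{ij} c'_{ij}\bigr)\ab'_i = \ab'_i$, the $C_i$-rows contribute $C_i \cb'_i$, which is $\mathbf{0}$ by direct inspection of the rows of $C_i$, and all other block rows vanish since the columns of block $i$ have zeros there. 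This gives $\ab_{B_i} = (\ab'_i, {\bf 0}, \ldots, {\bf 0})$.

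The main obstacle is the bookkeeping of the first step: one has to simultaneously exploit the $C_i$-relations and the identity $\sum_j \lambda_{ij} c'_{ij} = 1$, and leverage primitivity of $\cb'_i$ to pass from rational to integer scalars. The sign analysis in the encoding formula is elementary but fiddly, and one must be careful to treat uniformly the cases $(\vb_i)_{j'} > 0$ and $(\vb_i)_{j'} < 0$.
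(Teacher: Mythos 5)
Your argument is correct. Note that the paper does not actually prove this statement --- it is imported verbatim from \cite[Theorem 2.1]{PTV1} --- so there is no internal proof to compare against; your computation (determine $\Ker_{\ZZ}(A)$ from the $C_i$-relations plus primitivity, identify it with $\{(t_1\cb'_1,\ldots,t_s\cb'_s): \sum_i t_i\ab'_i={\bf 0}\}$, and read off the Gale rows $G(\ab_{(i,j)})=c'_{ij}\vb_i$ to get the subbouquet structure and the encoding vectors) is exactly the natural route and is the one taken in the cited reference. The only point you gloss over is the case $\vb_i={\bf 0}$ (a free subbouquet), where there is no index $j'$ to fix; but there $\cb_{B_i}$ is by definition merely \emph{any} vector supported on the $i$-th block with positive first nonzero entry, and $({\bf 0},\ldots,\cb'_i,\ldots,{\bf 0})$ qualifies since $\cb'_i$ has full support and $c'_{i1}>0$, so the conclusion stands with one extra sentence.
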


\begin{lem}\label{projective_GLM}
Let $A=\GLM(\ab'_1,\ldots,\ab'_s|\cb'_1,\ldots,\cb'_s)$ be such that $\sum_j c'_{ij}=0$ for all $i=1,\ldots,s$. Then $X_A$ is a projective toric variety. 
\end{lem}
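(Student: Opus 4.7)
The plan is to verify the projectivity condition stated in the introduction, namely that $(1,1,\ldots,1)\in\ZZ^n$ lies in the $\QQ$-row span of $A$, equivalently that $\sum_{k=1}^n u_k = 0$ for every $\ub\in\Ker_{\ZZ}(A)$.

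I see two essentially equivalent routes. The first is a direct computation on the matrix $A$: fix $i$, and sum the $m_i-1$ rows of the block $C_i$, viewed as rows of $A$ with zeros in the other column-blocks. Within the $i$-th block of columns the first coordinate becomes $-\sum_{k=2}^{m_i} c'_{ik}$, which by hypothesis equals $c'_{i1}$; the $(k{+}1)$-st coordinate is $c'_{i1}$ for each $k=1,\ldots,m_i-1$; and the entries are zero outside the $i$-th block. Dividing by $c'_{i1}\neq 0$ (nonzero because $\cb'_i$ has full support) and then summing over $i$, we obtain $(1,1,\ldots,1)$ as a rational combination of rows of $A$.

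The second route, more in the spirit of the paper, uses the machinery of Section~\ref{Section 2}: by Theorem~\ref{inverse_construction}, the bouquet-index-encoding vector of the $i$-th subbouquet of $A$ is $\cb_{B_i}=(\mathbf{0},\ldots,\cb'_i,\ldots,\mathbf{0})\in\ZZ^n$, with $\cb'_i$ placed in the $i$-th block of size $m_i$. Theorem~\ref{all_is_well} then identifies every element of $\Ker_{\ZZ}(A)$ as $D(\vb)=\cb_{B_1}v_1+\cdots+\cb_{B_s}v_s$ for some $\vb\in\Ker_{\ZZ}(A_B)$, whose $(j,t)$-component is $c'_{jt}v_j$. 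Summing the components of $D(\vb)$ block by block yields
\[
\sum_{j=1}^s v_j\Bigl(\sum_{t=1}^{m_j}c'_{jt}\Bigr),
\]
which vanishes by hypothesis, so $\Ker_{\ZZ}(A)\perp(1,\ldots,1)$ and $X_A$ is projective.

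There is no substantive obstacle; the result is essentially a direct corollary of either the block structure of the generalized Lawrence matrix or of Theorems~\ref{inverse_construction} and \ref{all_is_well}. The only care needed is in matching the block indexing of the Lawrence construction with the subbouquet indexing before applying $\sum_j c'_{ij}=0$ termwise.
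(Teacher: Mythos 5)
Your first route is exactly the paper's own proof: sum the rows of each block $C_i$, use $\sum_j c'_{ij}=0$ to turn the first entry into $c'_{i1}$, divide by $c'_{i1}\neq 0$, and sum over $i$ to place $(1,\ldots,1)$ in the rational row span of $A$. The argument is correct (including the justification that $c'_{i1}\neq 0$ from full support), and your second, kernel-based route is an equivalent dual formulation of the same fact.
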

\begin{proof}
 Adding up the rows of $C_i$ we obtain the row vector $(-c'_{i2}-\cdots-c'_{im_i},c'_{i1},\ldots,c'_{i1})$ which is equal by the hypothesis with $(c'_{i1},c'_{i1},\ldots,c'_{i1})$, 
 and since $c'_{i1}\neq 0$ we conclude that  ${\bf 1}_i=(1,\ldots,1)\in \ZZ^{m_i}$ lies in the rowspan of $C_i$. Since this can be done for every $i$ by adding the corresponding row vectors  $({\bf 0},\ldots,{\bf 1}_i,\ldots,{\bf 0})\in\ZZ^{n}$ of $A$ we obtain
 that  $(1,\ldots,1)\in \ZZ^{n}$ lies in the rowspan of $A$.
 \end{proof}
Therefore the projectiveness of $X_A$, when $A=\GLM(\ab'_1,\ldots,\ab'_s|\cb'_1,\ldots,\cb'_s)$,  does not depend on the vectors $\ab'_i$, in other words on the bouquet ideal, but only on the property of the bouquet-index-encoding vectors that $\sum_j c'_{ij}=0$ for all $i=1,\ldots,s$. For example in \ref{example1}  the bouquet ideal is the ideal of an affine monomial curve.

The next theorem is a reformulation of {\cite[Theorem 4.4]{BDR}} since the  vectors that their Gale transforms are parallel form a bouquet. 

\begin{thm} \label{3.3}
Let $A=[\ab_1\cdots\ab_n]$ be a non pyramidal configuration. Then $X_A$ is self-dual if and only if $\sum_{_{\ab_i\in B}}G(\ab_i)={\bf 0}$ for any bouquet $B$ of $A$. 
\end{thm}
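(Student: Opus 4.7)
The plan is to deduce Theorem~\ref{3.3} directly from \cite[Theorem 4.4]{BDR} by translating its combinatorial condition into the language of bouquets. The hint given in the statement is that ``vectors whose Gale transforms are parallel form a bouquet,'' so the whole proof reduces to reconciling the two partitions of $\{1,\ldots,n\}$ that appear in the two characterizations. I expect \cite[Theorem 4.4]{BDR} to assert (in essence) that $X_A$ is self-dual if and only if one can partition the columns of $A$ into blocks $S_1,\ldots,S_r$ such that, within each block, the Gale transforms $\{G(\ab_i) : i\in S_k\}$ are pairwise parallel and satisfy $\sum_{i\in S_k}G(\ab_i)=\mathbf{0}$.

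First, I would invoke that theorem verbatim and rewrite the characterization in the form above. Next, I would use the non-pyramidal hypothesis: since no $\ab_i$ is free, every $G(\ab_i)$ is a nonzero vector, and parallelism is a bona fide equivalence relation on $\{G(\ab_1),\ldots,G(\ab_n)\}$. By the definition of the bouquet graph $G_A$ recalled in Section~2, two vertices are adjacent precisely when their Gale transforms are parallel; hence the connected components of $G_A$ — the bouquets of $A$ — are exactly the equivalence classes of this parallelism relation. In particular, the bouquets are the maximal sets of coparallel columns.

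Now I would verify that ``every bouquet sums to zero in the Gale transform'' is equivalent to the existence of some partition $(S_1,\ldots,S_r)$ as above. One direction is immediate: if each bouquet sums to zero, the bouquet partition itself witnesses the BDR condition. For the converse, suppose $(S_1,\ldots,S_r)$ is a BDR partition. Since each $S_k$ consists of mutually parallel Gale transforms, and since a maximal such set is a bouquet, each $S_k$ is contained in a single bouquet. Consequently every bouquet $B$ is a disjoint union of those $S_k$'s contained in it, and summing the vanishing relations $\sum_{i\in S_k} G(\ab_i)=\mathbf{0}$ over these indices yields $\sum_{\ab_i\in B} G(\ab_i)=\mathbf{0}$.

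The main potential obstacle is purely bookkeeping: to make sure that the condition in \cite[Theorem 4.4]{BDR} really takes the shape of a partition into coparallel blocks with vanishing sums, rather than a subtly different formulation in terms of primitive representatives, sign conventions on the parallelism relation, or a specific preferred partition. Any such discrepancy would need to be absorbed by rescaling and by using primitivity of the bouquet-index-encoding vectors $\cb_B$ introduced in Section~2; these rescalings do not affect the vanishing of the sums in a given maximal parallel class, so the equivalence would still go through after minor adjustments.
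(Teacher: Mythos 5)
Your proposal is correct and takes essentially the same route as the paper: the paper offers no separate proof of Theorem~\ref{3.3}, presenting it only as a direct reformulation of \cite[Theorem 4.4]{BDR} justified by exactly the observation you make, namely that for a non-pyramidal configuration every $G(\ab_i)$ is nonzero, parallelism is an equivalence relation, and the bouquets are precisely its equivalence classes. Your additional check that ``some partition into coparallel blocks with vanishing sums'' is equivalent to ``every maximal coparallel class has vanishing sum'' is a harmless elaboration of the same identification.
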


\begin{thm} \label{non_pyramid}
Let $A=[\ab_1\cdots\ab_n]$  be a non pyramidal configuration. Then $X_A$ is self-dual if and only if $\sum_i (c_B)_i=0$ for any bouquet $B$ of $A$. 
\end{thm}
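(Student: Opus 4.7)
My plan is to reduce the statement to Theorem~\ref{3.3}: it suffices to show that, bouquet by bouquet, $\sum_{\ab_i\in B}G(\ab_i)=\mathbf{0}$ is equivalent to $\sum_i(c_B)_i=0$.

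Since $A$ is non-pyramidal, no bouquet is free, so for each bouquet $B$ every Gale transform $G(\ab_i)$ with $\ab_i\in B$ is nonzero, and by definition of the bouquet graph these vectors are pairwise parallel. Hence I may fix a primitive integer vector $\mathbf{w}$ (normalized so that its first nonzero entry is positive) and nonzero integers $\beta_i$ with $G(\ab_i)=\beta_i\mathbf{w}$ for every $\ab_i\in B$. This reduces the first sum to scalar arithmetic: $\sum_{\ab_i\in B}G(\ab_i)=\bigl(\sum_i\beta_i\bigr)\mathbf{w}$ vanishes if and only if $\sum_i\beta_i=0$.

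The crucial step is to identify $(c_B)_i$ as a nonzero rational multiple of $\beta_i$, with the same multiplier for every $i\in B$. Starting from the definition of $\cb_B$, I would pick a coordinate $j$ with $w_j\neq 0$ so that $G(\ab_i)_j=\beta_i w_j$ is nonzero for all $\ab_i\in B$; then a direct computation gives $g_j=\gcd_{\ab_i\in B}(\beta_i w_j)=|w_j|\cdot g$, where $g=\gcd_{\ab_i\in B}(\beta_i)$, and $\varepsilon_{\ell j}=\sgn(\beta_\ell w_j)=\sgn(\beta_\ell)\,\sgn(w_j)$. Substituting into $(c_B)_i=\varepsilon_{\ell j}\,G(\ab_i)_j/g_j$ and cancelling $\sgn(w_j)\cdot w_j/|w_j|=1$ yields
\[
(c_B)_i\;=\;\frac{\sgn(\beta_\ell)}{g}\,\beta_i,
\]
a fixed nonzero scalar times $\beta_i$, independent of $i$. (As a sanity check, this formula makes $(c_B)_\ell=|\beta_\ell|/g>0$ and $\gcd_i|(c_B)_i|=1$, matching the normalization built into $\cb_B$.)

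From here the equivalence $\sum_i(c_B)_i=0\iff\sum_i\beta_i=0$ is immediate, and combining with the scalar description of $\sum_{\ab_i\in B}G(\ab_i)$ and Theorem~\ref{3.3} finishes the proof. The only delicate point I anticipate is the sign-and-gcd bookkeeping in the identification of $(c_B)_i$ with a multiple of $\beta_i$; once that proportionality is verified, the rest of the argument is a one-line scalar computation.
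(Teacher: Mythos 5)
Your proof is correct and follows essentially the same route as the paper: reduce to Theorem~\ref{3.3} and compare the $j$-th components of the Gale transforms with the entries of $\cb_B$ via the formula $(c_B)_i=\varepsilon_{\ell j}G(\ab_i)_j/g_j$. If anything, your version is slightly more complete, since writing $G(\ab_i)=\beta_i\mathbf{w}$ with $\mathbf{w}$ primitive makes the converse implication ($\sum_i(c_B)_i=0\Rightarrow\sum_{\ab_i\in B}G(\ab_i)=\mathbf{0}$ in \emph{all} coordinates) explicit, a step the paper's proof leaves to the reader.
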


\begin{proof} The configuration is non pyramidal therefore all bouquets are non-free. 
  We recall the definition of the vector ${\bf c}_B$. Let $j$ be such that all
   $j$-components of the Gale transforms of the elements in $B$ are nonzero. Let $g_j=\gcd(G({\bf a}_i)_j| \ {\bf a}_i\in B)$ and fix the smallest
integer $\ell$ such that ${\bf a}_{\ell}\in B$.  Then $(c_B)_i=\varepsilon_{\ell j}G(\ab_i)_j/g_j$,
  where $\varepsilon_{\ell j}$ represents the sign of the integer $G({\bf a}_{\ell})_j$. From Theorem \ref{3.3} we have  $\sum_{_{\ab_i\in B}}G(\ab_i)={\bf 0}$ for any bouquet $B$ of $A$. Then the sum of their $j$-components is $0$, thus  $\sum_i G(\ab_i)_j=0$ therefore $\sum_i \varepsilon_{\ell j}G(\ab_i)_j/g_j=0$. We conclude that $\sum_i (c_B)_i=0$ for any bouquet $B$ of $A$.
\end{proof}

\begin{thm}\label{self_dual_GLM_nonpyramid}
Let $A=\GLM(\ab'_1,\ldots,\ab'_s|\cb'_1,\ldots,\cb'_s)$ be a generalized Lawrence matrix with $\ab'_1,\ldots,\ab'_s$
a non-pyramidal configuration. 
Then $X_A$ is a self-dual projective toric variety if and only if $\sum_j c'_{ij}=0$ for all $i=1,\ldots,s$.  
\end{thm}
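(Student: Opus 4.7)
The plan is to reduce everything to Theorem~\ref{non_pyramid} by explicitly identifying the bouquet-index-encoding data of $A$ with the prescribed vectors $\cb'_1,\ldots,\cb'_s$. Theorem~\ref{inverse_construction} already tells us that the bouquet decomposition of $A$ is precisely $B_1,\ldots,B_s$ with encoding vectors $\ab_{B_i}=(\ab'_i,{\bf 0})\in\ZZ^{p}$ and $\cb_{B_i}=({\bf 0},\ldots,\cb'_i,\ldots,{\bf 0})\in\ZZ^{n}$. In particular the nonzero entries of $\cb_{B_i}$ are exactly the entries of $\cb'_i$, so
\[
\sum_{t=1}^{n} (c_{B_i})_t \;=\; \sum_{j=1}^{m_i} c'_{ij}, \qquad i=1,\ldots,s.
\]

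Before applying Theorem~\ref{non_pyramid} I would verify that $A$ is non-pyramidal. The bouquet configuration of $A$ has columns $\ab_{B_i}=(\ab'_i,{\bf 0})$, and since zero-padding leaves kernels untouched we have $\Ker_{\ZZ}(A_B)=\Ker_{\ZZ}([\ab'_1,\ldots,\ab'_s])$; hence $A_B$ is non-pyramidal if and only if $\{\ab'_1,\ldots,\ab'_s\}$ is non-pyramidal, which is the hypothesis. Lemma~\ref{non_pyramid_pres_bouq} then yields the non-pyramidality of $A$ itself.

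With these two ingredients in hand the equivalence drops out. For the ``if'' direction, assume $\sum_j c'_{ij}=0$ for every $i$: Lemma~\ref{projective_GLM} makes $X_A$ projective, the displayed identity gives $\sum_t (c_{B_i})_t=0$ for every bouquet of $A$, and Theorem~\ref{non_pyramid} then produces the self-duality of $X_A$. Conversely, if $X_A$ is self-dual projective, applying Theorem~\ref{non_pyramid} to the non-pyramidal configuration $A$ yields $\sum_t(c_{B_i})_t=0$ for each bouquet, and the identity above translates this back into the desired equalities $\sum_j c'_{ij}=0$ for all $i=1,\ldots,s$.

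The only step where I foresee having to be a little careful is the verification that $A$ is non-pyramidal via Lemma~\ref{non_pyramid_pres_bouq}; once this is set, everything else is essentially bookkeeping that reads off the bouquet-index-encoding vectors of $A$ directly from the defining data of the generalized Lawrence matrix.
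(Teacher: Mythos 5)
Your proposal is correct and follows essentially the same route as the paper: identify the bouquet data of $A$ via Theorem~\ref{inverse_construction}, deduce non-pyramidality of $A$ from Lemma~\ref{non_pyramid_pres_bouq}, and conclude with Theorem~\ref{non_pyramid}. You are in fact slightly more explicit than the paper in checking that $\Ker_{\ZZ}(A_B)=\Ker_{\ZZ}([\ab'_1,\ldots,\ab'_s])$ and in invoking Lemma~\ref{projective_GLM} for projectivity, both of which are worthwhile clarifications.
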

\begin{proof}
From  \cite[Theorem 2.1, Corollary 2.3]{PTV1}  we know that $A$ has exactly $s$ bouquets and the encoding bouquet vectors of $A$ are $\ab_{B_i}=(\ab'_i,{\bf 0},\ldots,{\bf 0})\in\ZZ^{p}$ and $\cb_{B_i}=({\bf 0},\ldots,\cb'_i,\ldots,{\bf 0})\in\ZZ^{n}$ for $i=1,\ldots,s$. Applying Lemma~\ref{non_pyramid_pres_bouq} for $A$ we obtain that $A$ is a non-pyramidal configuration. The conclusion follows now from Theorem~\ref{non_pyramid}. 
\end{proof}

Theorem  \ref{self_dual_GLM_nonpyramid} gives a way of producing examples of non-pyramidal self-dual projective toric varieties. Take any matrix with columns $\ab'_1,\ldots,\ab'_s$ to be a non-pyramidal configuration, then take a set of $s$ primitive vectors $\cb'_1,\ldots,\cb'_s$ with  the sum of their components zero. Then the toric variety defined by the matrix $\GLM(\ab'_1,\ldots,\ab'_s|\cb'_1,\ldots,\cb'_s)$ is a non-pyramidal self-dual projective toric variety. 
 \begin{ex} \label{example1} {\em
  Let $A$ be any configuration, for example the one given by the columns of the matrix $A=(4 \ 5 \ 6 \ 7)$. Take a primitive vector for each column such that
the sum of the components for each primitive vector is zero, say $c'_1=(1, -1), c'_2=(7,-4,-3), c'_3=(2023, 1, -2024), c'_4=(1, 3, -2, -2)$.
Computing any integer solution for each $1=\lambda_{i1}c'_{i1}+\cdots+\lambda_{im_i}c'_{im_i}$, 
for example $1=1\cdot 1+0\cdot(-1)$, $1=1\cdot 7+0\cdot(-4)+2\cdot(-3)$, $1=0\cdot 2023+1\cdot 1+0\cdot(-2024)$ and $1=0\cdot 1+1\cdot 3+1\cdot(-2)+0\cdot(-2)$ we get the
 generalized Lawrence matrix $C  \in \ZZ^{9\times 12}$:
\[
\tiny{
\GLM(\ab'_1,\ldots,\ab'_4|\cb'_1,\ldots,\cb'_4)=\
\left( \begin{array}{cccccccccccc}
 4 & 0 & 5 & 0 & 10 & 0 & 6 & 0 & 0 & 7 & 7 & 0  \\
 1 & 1 & 0 & 0 & 0 & 0 & 0 & 0 & 0 & 0 & 0 & 0  \\
 0 & 0 & 4 & 7 & 0 & 0 & 0 & 0 & 0 & 0 & 0 & 0  \\
 0 & 0 & 3 & 0 & 7 & 0 & 0 & 0 & 0 & 0 & 0 & 0  \\
 0 & 0 & 0 & 0 & 0 & -1 & 2023 & 0 & 0 & 0 & 0 & 0  \\
 0 & 0 & 0 & 0 & 0 & 2024 & 0 & 2023 & 0 & 0 & 0 & 0  \\
 0 & 0 & 0 & 0 & 0 & 0 & 0 & 0 & -3 & 1 & 0 & 0  \\
 0 & 0 & 0 & 0 & 0 & 0 & 0 & 0 & 2 & 0 & 1 & 0  \\
 0 & 0 & 0 & 0 & 0 & 0 & 0 & 0 & 2 & 0 & 0 & 1  
\end{array} \right). }
\]
Theorem \ref{self_dual_GLM_nonpyramid} proves that $X_C$ is a self-dual projective toric variety.}
\end{ex}
Next Theorem \ref{co} actually says that {\em all} non-pyramidal configurations are taken in the way described above.

\begin{thm}\label{co}
Let $A$ be a non-pyramidal configuration such that $X_A$ is self-dual. Then there exists a generalized Lawrence matrix $A'=\GLM(\ab'_1,\ldots,\ab'_s|\cb'_1,\ldots,\cb'_s)$ such that $X_A=X_{A'}$ after a possible renumeration of the columns, where    $\ab'_1,\ldots,\ab'_s\in\ZZ^d$ is a non-pyramidal configuration and $\sum_j c'_{ij}=0$ for all $i=1,\ldots,s$.  
\end{thm}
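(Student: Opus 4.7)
The plan is to show that $A$ itself arises as the generalized Lawrence matrix built from its own bouquet data, so the candidate $A'$ will simply be $\GLM$ applied to the encoding data of $A$. First I would form the bouquet decomposition $B_1,\ldots,B_s$ of $A$ and, after renumbering so that each bouquet occupies a contiguous block of columns, extract the bouquet matrix $A_B=[\ab_{B_1},\ldots,\ab_{B_s}]$ together with the bouquet-index-encoding vectors $\cb_{B_1},\ldots,\cb_{B_s}$. Since $A$ is non-pyramidal, every bouquet is non-free, so each $\cb_{B_i}$ is primitive, has positive first nonzero entry, and its support is exactly the $i$-th block. I would then set $\ab'_i:=\ab_{B_i}\in\ZZ^m$ and let $\cb'_i\in\ZZ^{m_i}$ be the tuple of nonzero components of $\cb_{B_i}$, where $m_i=|B_i|$; by construction each $\cb'_i$ is primitive, of full support, and has a positive first component, so the data is admissible input for the $\GLM$ construction.

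Next I would check the two hypotheses required for $A':=\GLM(\ab'_1,\ldots,\ab'_s|\cb'_1,\ldots,\cb'_s)$ to have the desired form. Non-pyramidality of the configuration $\ab'_1,\ldots,\ab'_s$ (equivalently, of $A_B$) follows from Lemma~\ref{non_pyramid_pres_bouq} applied to $A$. The condition $\sum_j c'_{ij}=0$ for all $i=1,\ldots,s$ is exactly the conclusion of Theorem~\ref{non_pyramid}, which is available because $A$ is non-pyramidal and $X_A$ is self-dual. So the $\GLM$ that I have constructed has precisely the structure promised by the statement.

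It remains to prove $X_A=X_{A'}$, which reduces to showing $\Ker_{\ZZ}(A)=\Ker_{\ZZ}(A')$ as sublattices of $\ZZ^n$, since then $I_A=I_{A'}$. By Theorem~\ref{inverse_construction}, the bouquet matrix of $A'$ equals $A_B$ (up to padding the extra rows with zeros, which does not change the kernel) and its bouquet-index-encoding vectors coincide with those of $A$. Applying Theorem~\ref{all_is_well} to both $A$ and $A'$, the kernels $\Ker_{\ZZ}(A)$ and $\Ker_{\ZZ}(A')$ are both obtained as the image of $\Ker_{\ZZ}(A_B)$ under the same map $D$ determined by the encoding vectors $\cb_{B_1},\ldots,\cb_{B_s}$. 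Hence the two kernels coincide as subsets of $\ZZ^n$, giving $I_A=I_{A'}$ and therefore $X_A=X_{A'}$.

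I expect the only delicate point is the bookkeeping in the last paragraph: one has to verify that the bouquet decomposition of $A$ and the bouquet decomposition handed to us by Theorem~\ref{inverse_construction} for $A'$ produce the \emph{same} encoding vectors $\cb_{B_i}$ (positive first nonzero component, primitivity, choice of pivot index $\ell$), not merely projectively equivalent ones. Once this is matched correctly the conclusion is automatic from Theorem~\ref{all_is_well}; if any sign or scaling discrepancy arose, it would at worst require a trivial column renumeration, which is already permitted by the statement.
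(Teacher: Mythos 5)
Your proof is correct and takes essentially the same route as the paper: the paper simply invokes \cite[Corollary 2.3]{PTV1} for the existence of a generalized Lawrence matrix with the same toric ideal, whereas you unpack that citation by building the $\GLM$ explicitly from the bouquet data of $A$ and checking the kernel equality via Theorems~\ref{all_is_well} and~\ref{inverse_construction}. The remaining two conditions are verified the same way in both arguments (you apply Theorem~\ref{non_pyramid} directly to $A$ for the condition $\sum_j c'_{ij}=0$, while the paper routes it through Theorem~\ref{self_dual_GLM_nonpyramid} applied to $A'$; these are equivalent), and your worry about matching encoding vectors is resolved exactly by Theorem~\ref{inverse_construction}, which returns the vectors $\cb'_i$ you fed in.
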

\begin{proof}  For the integer matrix $A$ we know from \cite[Corollary 2.3]{PTV1} that there exists a generalized Lawrence matrix $A'=\GLM(\ab'_1,\ldots,\ab'_s|\cb'_1,\ldots,\cb'_s)$
    such that $I_{A}=I_{A'}$, up to permutation of column indices. Then $A'$ is also a non-pyramidal configuration, therefore by Lemma \ref{non_pyramid_pres_bouq} and Theorem \ref{inverse_construction} the
    configuration $\ab'_1,\ldots,\ab'_s\in\ZZ^d$ is non-pyramidal. The last condition that $\sum_j c'_{ij}=0$ for all $i=1,\ldots,s$ follows from Theorem \ref{self_dual_GLM_nonpyramid}.
\end{proof}

For the next result we need one notation: the vector $\varepsilon_i\in\ZZ^d$, $1\leq i\leq d$, represents the integer vector with the only nonzero component equal to $1$ and on the $i$-th position.  

\begin{thm}\label{self_dual_GLM_pyramid}
Consider the generalized Lawrence matrix $$C=\GLM(\varepsilon_1,\ab'_1,\ldots,\ab'_s|\cb'_0,\cb'_1,\ldots,\cb'_s)$$ with $n$ columns, $\cb'_0\in\ZZ^k$, $c'_{01}=1$, $k\geq 1$, $\ab'_i=(0,*,\ldots,*)$ and 
$\sum_j c'_{ij}=0$ for all $i=1,\ldots,s$. If the configuration $\{\ab'_1,\ldots,\ab'_s\}\subset \ZZ^d$ is non-pyramidal, then for any $n+k$ multiset configuration $A$ with the ground set the set of columns of $C$  we have that $X_A$ is a self-dual projective toric variety. 
\end{thm}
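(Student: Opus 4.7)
First, I would analyze the bouquet structure of $C$ itself. By Theorem~\ref{inverse_construction}, $C$ has subbouquets $B_0,B_1,\ldots,B_s$ with encodings $\ab_{B_0}=\varepsilon_1$ and $\ab_{B_i}=\ab'_i$ for $i\geq 1$, and with $\cb_{B_i}$ corresponding to $\cb'_i$. Since each $\ab'_i=(0,*,\ldots,*)$ while $\varepsilon_1=(1,0,\ldots,0)$, every element of $\Ker_{\ZZ}([\varepsilon_1,\ab'_1,\ldots,\ab'_s])$ has vanishing first coordinate; thus $\varepsilon_1$ is a free vector in this bouquet configuration, and by Lemma~\ref{non_pyramid_pres_bouq} the $k$ columns of $B_0$ are exactly the free vectors of $C$, making $C$ itself a $k$-pyramidal configuration. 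In particular, the hypothesis $\sum_j c'_{0j}=0$ is \emph{not} assumed for the first bouquet, which is precisely why $C$ fails to satisfy Theorem~\ref{self_dual_GLM_nonpyramid} directly.

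Next, I would examine the multiset $A$. The $k$ extra copies in $A$ contribute kernel elements of the form $e_i-e_j$ for each pair of duplicated indices, which moves the formerly free $B_0$-columns into the supports of kernel elements and thereby turns them non-free. My plan is to show that these duplications absorb the pyramidal direction encoded by $\varepsilon_1$, so that the bouquet configuration of $A$ reduces to $\{\ab'_1,\ldots,\ab'_s\}$, and hence, by \cite[Corollary 2.3]{PTV1} (the reverse reading of Theorem~\ref{inverse_construction}), $A$ admits a GLM presentation
\[
A=\GLM(\ab'_1,\ldots,\ab'_s\mid\tilde\cb_1,\ldots,\tilde\cb_s).
\]

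It then remains to verify the hypotheses of Theorem~\ref{self_dual_GLM_nonpyramid} for this new presentation. The bouquet configuration $\{\ab'_1,\ldots,\ab'_s\}$ is non-pyramidal by assumption, and thus by Lemma~\ref{non_pyramid_pres_bouq} $A$ is non-pyramidal. Each encoding vector $\tilde\cb_i$ is obtained by combining the original $\cb'_i$ (whose components sum to zero by hypothesis) with balanced $(1,-1)$-type contributions coming from the $B_0$-duplications, so $\sum_j(\tilde\cb_i)_j=0$ is inherited from $\sum_j c'_{ij}=0$. Theorem~\ref{self_dual_GLM_nonpyramid} then yields that $X_A$ is a self-dual projective toric variety. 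The main obstacle is the careful bookkeeping in the middle paragraph: one must show precisely that the $k$ extra copies absorb \emph{every} free vector of $C$ and that the resulting subbouquets of $A$ all have encoding vectors with vanishing component sum, which requires tracking how the duplications distribute among the columns of $B_0$ relative to the weights $\cb'_0$ and how the pyramidal $\varepsilon_1$-direction is ``balanced out'' by the new $(1,-1)$-contributions.
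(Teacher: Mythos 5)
There is a genuine gap, and it lies exactly where you flagged ``the main obstacle'': the middle reduction step is not just delicate bookkeeping, it is false for most of the multisets the theorem covers. The statement allows \emph{any} $n+k$ multiset $A$ with ground set the columns of $C$; the $k$ repetitions may all land on non-free columns, or pile up on a single column. By Theorem~\ref{10}, $A$ is then $s$-pyramidal with $s$ anywhere between $0$ and $k$, and it is non-pyramidal only in the single distribution where each free column of $C$ is repeated exactly once. The paper's own Example~\ref{Example B} (all four repetitions on column $12$) produces a $4$-pyramidal self-dual variety whose ideal is weakly but not strongly robust; by Theorem~\ref{iff} such an $A$ cannot admit a presentation $\GLM(\ab'_1,\ldots,\ab'_s\mid\tilde\cb_1,\ldots,\tilde\cb_s)$ with $\{\ab'_1,\ldots,\ab'_s\}$ non-pyramidal and all $\sum_j(\tilde\cb_i)_j=0$, since that would force strong robustness via Theorem~\ref{self_dual_GLM_nonpyramid} and Theorem~\ref{four_sets}. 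So your plan of ``absorbing'' the pyramidal direction and then invoking Theorem~\ref{self_dual_GLM_nonpyramid} can at best prove the theorem for one special multiset, not for all of them.

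The paper's route avoids this entirely: it never tries to make $A$ non-pyramidal. Instead it verifies the two conditions of \cite[Theorem 3.8]{BDR}, which characterize self-duality of the multiset configuration purely in terms of the ground set $C$ and the total number $k=|A|-|C|$ of repetitions, independently of where those repetitions sit. Concretely: (1) the $k$ columns of the $\varepsilon_1$-block of $C$ are free (exhibited via explicit covectors $\ub_i$, using $c'_{01}=1$ to get a unimodular block $\binom{A_0}{C_0}$, which also shows these columns form a lattice basis of $\chi(S_1)$); and (2) the remaining columns $\mathcal C_2$ have the same kernel as $\GLM(\ab'_1,\ldots,\ab'_s\mid\cb'_1,\ldots,\cb'_s)$, hence are non-pyramidal and define a self-dual variety by Theorem~\ref{self_dual_GLM_nonpyramid}. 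Your first paragraph (identifying $C$ as $k$-pyramidal with the $B_0$-columns free) is correct and is essentially condition (1); what is missing is the splitting/basis verification of condition (2) and, above all, the appeal to the pyramidal case of the BDR criterion in place of a reduction to the non-pyramidal GLM theorem. You also do not address projectivity of $X_A$, which the paper establishes by showing $(1,\ldots,1)$ lies in the row span of $C$.
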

\begin{proof}
First we note that, by the proof of Lemma \ref{projective_GLM}, the vector $(0,\ldots,0,1,\ldots,1)\in \ZZ^{n}$ lies in the rowspan of $C$, where the first $k$ components are zero and the rest are one.
The  matrix \[
\left(\begin{array}{c}  A_0 \\ C_0 \end{array} \right) =
\left( \begin{array}{ccccc}
\lambda_{01} & \lambda_{02} & \lambda_{03} & \ldots & \lambda_{0k} \\
 -c'_{02} &  c'_{01} &  0 & \ldots & 0  \\
 -c'_{03} & 0 & c'_{01} & \ldots & 0  \\
\vdots & \vdots & \vdots &  \ddots & \vdots \\
 -c'_{0k} & 0 & 0 & \ldots  & c'_{01}
\end{array} \right)
=\left( \begin{array}{ccccc}
1 & 0 & 0 & \ldots & 0 \\
 -c'_{02} &  1 &  0 & \ldots & 0  \\
 -c'_{03} & 0 & 1 & \ldots & 0  \\
\vdots & \vdots & \vdots &  \ddots & \vdots \\
 -c'_{0k} & 0 & 0 & \ldots  & 1
\end{array} \right)
	 \in\ZZ^{k\times k},
\]
since we can choose as the solution to $1=\lambda_{01}c'_{01}+\cdots+\lambda_{0k}c'_{0k}$ with $c'_{01}=1$ the
$\lambda_{01}=1$ and  $\lambda_{0j}=0$, for all $2\leq j\leq k$. Therefore in the row span of the matrix 
$\left(\begin{array}{c}  A_0 \\ C_0 \end{array} \right)$ belongs every vector of $\ZZ^{k}$,
thus also the $(1,\ldots,1)\in \ZZ^{k}$. Note that the first row of $C$ and the  $d+1$-th up to $(d+k)$-th rows have the last $n-k$ components equal to 0. Concluding, we see that  $(1,\ldots,1,0,\ldots,0)\in \ZZ^{n}$ lies in the row span of $C$, where the first $k$ components are one and the rest zero.
 Combining the two results up to this point we get that $(1,\ldots,1,1,\ldots,1)\in \ZZ^{n}$ belongs to the row span of $C$ and so
the toric variety $X_C$ is projective. $A$ is a  multiset configuration  with  ground set the set of columns of $C$, therefore the toric variety $X_A$ is also projective. 

Theorem 3.8 of \cite{BDR} states that $X_A$ is self dual if and only if (1) the configuration $C$ is $k$-pyramidal
and (2) there exists a splitting of the lattice of characters $\ZZ^p$
as $S_1\times S_2$ such that, after reordering of the elements of $C$, it holds that $C={\mathcal C}_1\cup {\mathcal C}_2$, where 
${\mathcal C}_1$ is a basis of $\chi (S_1)$ and ${\mathcal C}_2\subset \chi (S_2)$ is a non-pyramidal configuration and the toric variety $X_{{\mathcal C}_2}$ is self-dual. 

We denote the vector columns of the matrix $C\in\ZZ^{p\times n}$ by $\ab_1,\ldots,\ab_n$. The vectors in ${\mathcal C}_1=\{\ab_1,\ldots,\ab_k\}$ are free vectors  since there exist vectors $\ub_i\in\QQ^p$ with $i=1,\ldots,k$ such that $$(\ub_i\cdot\ab_1,\ldots,\ub_i\cdot\ab_n)=\varepsilon_{i}\in\ZZ^p.$$
Indeed, the vector $\ub_1=\varepsilon_1$ and the vectors $\ub_i=c'_{0i}\varepsilon_1+\varepsilon_{d+i-1}$ for all $i=2,\ldots,k$ satisfy the above conditions, since 

\[  C=
\left( \begin{array}{ccccc}
 \epsilon_1 &  0 & \  0 & \cdots & \  0 \\
  0 & A_1 &\ A_2 & \cdots & \ A_s \\
 C_0 &  0 &\ 0 & \cdots & \  0 \\
  0 & C_1 &\ 0 & \cdots & \ 0 \\
  0 & 0 &\ C_2 & \cdots & \ 0 \\
\vdots & \vdots & \vdots & \ddots & \vdots \\
  0 & 0 &\ 0 & \cdots &\ C_s
\end{array} \right)  \in \ZZ^{p\times n}, 
\]
where the vector $\varepsilon_1\in\ZZ^k$. Thus $\ab_1,\ldots,\ab_k$ are free vectors, therefore the condition
(1) of \cite[Theorem 3.8]{BDR} is satisfied. 
Let 
$$\chi(S_1)=\{(v_1, 0, 0, \cdots, 0, v_{d+1}, \cdots, v_{d+k}, 0, \cdots, 0)|v_i\in \ZZ\},$$  
$$\chi(S_2)=\{(0, v_2, v_3, \cdots,  v_{d}, 0, \cdots, 0, v_{d+k+1},  \cdots, v_p)|v_i\in \ZZ\},$$
then $\ZZ^p= \chi(S_1)\times \chi(S_2)$. So ${\mathcal C}_1$ is a basis of $\chi (S_1)$.
Let ${\mathcal C}_2=\{ \ab_{k+1},\ldots,\ab_{n}\}$ be the remaining column vectors of $C$. Denote also by ${\mathcal C}_2$ the matrix with columns $ \ab_{k+1},\ldots,\ab_{n}$. The two matrices ${\mathcal C}_2$ and $\GLM(\ab'_1,\ldots,\ab'_s|\cb'_1,\ldots,\cb'_s)$ have the same kernel, since the two matrices differ by some rows of zeroes. Thus they define the same toric ideal and variety. 

The configuration $\{\ab'_1,\ldots, \ab'_s\}$ is non-pyramidal.
It follows  from Lemma \ref{non_pyramid_pres_bouq} and Theorem \ref{inverse_construction} that the columns of the generalized Lawrence matrix $$\GLM(\ab'_1,\ldots,\ab'_s|\cb'_1,\ldots,\cb'_s)$$ form a non-pyramidal configuration. Thus ${\mathcal C}_2\subset \chi (S_2)$ is a non-pyramidal configuration.  The toric variety $X_{{\mathcal C}_2}$ is self-dual by Theorem \ref{self_dual_GLM_nonpyramid}, since $\sum_j c'_{ij}=0$ for all $i=1,\ldots,s$. 

Both conditions of \cite[Theorem 3.8]{BDR} are satisfied thus $X_A$ is a self-dual projective toric variety. \end{proof}

Theorem  \ref{self_dual_GLM_pyramid} gives a way of producing examples of pyramidal or non-pyramidal self-dual projective toric varieties. Take any matrix with columns $\ab'_1,\ldots,\ab'_s$, with  $\ab'_i=(0,*,\ldots,*)$, to be a non-pyramidal configuration, then take a set of $s$ primitive vectors $\cb'_1,\ldots,\cb'_s$ with  the sum of their components zero. Then create the $p\times n$-matrix  $C=\GLM(\varepsilon_1,\ab'_1,\ldots,\ab'_s|\cb'_0,\cb'_1,\ldots,\cb'_s)$ with
$\cb'_0\in\ZZ^k$, $c'_{01}=1$ and $k\geq 1$. Take any $n+k$ multiset configuration $A$ with the ground set the set of columns of $C$, then $X_A$ is a self-dual projective toric variety. Theorem \ref{10} determines under what conditions on the multiset $A$ the resulting self-dual toric variety will be pyramidal or not. Next Theorem \ref{converse_self_dual_GLM_pyramid} says that {\em all} self-dual projective toric varieties are produced in this way.

\begin{thm}\label{converse_self_dual_GLM_pyramid}
Let $A$ be a multiset configuration  with the ground set $C$ such that $X_A$ is self-dual. Then there exists a generalized Lawrence matrix $$C'=\GLM(\varepsilon_1,\ab'_1,\ldots,\ab'_s|\cb'_0,\cb'_1,\ldots,\cb'_s)$$ and a multiset configuration $A'$ with ground set the set of columns of $C'$, such that $X_A=X_{A'}$ after a possible renumeration of the columns, where $k=|A|-|C|$, $\cb'_0\in\ZZ^k$, $c'_{01}=1$,   $\ab'_i=(0,*,\ldots,*)\in\ZZ^d$, $\ab'_1,\ldots,\ab'_s$ a non-pyramidal configuration and $\sum_j c'_{ij}=0$ for all $i=1,\ldots,s$.  
\end{thm}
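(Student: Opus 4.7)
The plan is to invert the construction of Theorem~\ref{self_dual_GLM_pyramid} by combining the combinatorial criterion \cite[Theorem~3.8]{BDR} with the non-pyramidal case already handled in Theorem~\ref{co}. Setting $k=|A|-|C|$, the BDR criterion applied to the self-dual variety $X_A$ yields a decomposition of the ground set $C$ (up to reordering) as $C={\mathcal C}_1\cup{\mathcal C}_2$ relative to a splitting $\ZZ^p=\chi(S_1)\times\chi(S_2)$, where ${\mathcal C}_1$ is the set of $k$ free vectors of $C$ and forms a $\ZZ$-basis of $\chi(S_1)$, and ${\mathcal C}_2\subset\chi(S_2)$ is a non-pyramidal configuration with $X_{{\mathcal C}_2}$ self-dual.

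Next, applying Theorem~\ref{co} to ${\mathcal C}_2$ produces (after a further renumeration) a generalized Lawrence matrix $\GLM(\ab'_1,\ldots,\ab'_s\mid\cb'_1,\ldots,\cb'_s)$ whose defining data $\{\ab'_1,\ldots,\ab'_s\}$ is a non-pyramidal configuration and satisfies $\sum_j c'_{ij}=0$ for every $i=1,\ldots,s$. I would then choose a $\ZZ$-basis of $\ZZ^p$ adapted to the splitting so that $\chi(S_1)$ is spanned by the coordinate $\varepsilon_1$ together with the rows supporting the $C_0$-block, while $\chi(S_2)$ is spanned by the rows supporting the $\ab'_i$-rows and the $C_i$-blocks. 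Since $\ab'_i\in\chi(S_2)$, this change of basis automatically forces $\ab'_i=(0,*,\ldots,*)\in\ZZ^d$, matching the form required by the theorem.

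To realize the free part ${\mathcal C}_1$ as the first block of a $\GLM$ built from $\varepsilon_1$ and a primitive vector $\cb'_0\in\ZZ^k$ with $c'_{01}=1$, I would read off the bouquet-index-encoding vector of the unique free bouquet of $C$: this vector is primitive of length $k$, and after reordering the $k$ free vectors so that the selected leading entry equals $1$ (which is possible because ${\mathcal C}_1$ is a $\ZZ$-basis of $\chi(S_1)$), it serves as $\cb'_0$. With the special choice $\lambda_{01}=1$, $\lambda_{0j}=0$ for $j\geq 2$ used inside the proof of Theorem~\ref{self_dual_GLM_pyramid}, the corresponding first block of the $\GLM$ becomes a $\ZZ$-basis of $\chi(S_1)$ coinciding with ${\mathcal C}_1$ under the coordinate change above. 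Assembling the pieces yields
\[
C'=\GLM(\varepsilon_1,\ab'_1,\ldots,\ab'_s\mid\cb'_0,\cb'_1,\ldots,\cb'_s),
\]
and taking $A'$ to be the multiset on the columns of $C'$ that records the multiplicities of $A$ under the renumeration gives $X_A=X_{A'}$.

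The step I expect to be the main obstacle is extracting from \cite[Theorem~3.8]{BDR} the exact relationship between the multiplicities in $A$ and the pyramidality index of $C$, so that the number of free vectors of $C$ indeed equals $k=|A|-|C|$. Once this is pinned down, the rest is a routine change of basis combined with the already available non-pyramidal case: Theorem~\ref{co} handles ${\mathcal C}_2$, while the normalisation $c'_{01}=1$ for the free bouquet amounts to choosing the correct ordering of the free vectors of $C$.
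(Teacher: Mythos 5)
Your proposal follows essentially the same route as the paper: split $C=\mathcal{C}_1\cup\mathcal{C}_2$ via \cite[Theorem 3.8]{BDR}, apply the non-pyramidal classification (Theorem \ref{co}, i.e.\ \cite[Corollary 2.3]{PTV1}) to $\mathcal{C}_2$, and reassemble the identity block on the free vectors as the $(\varepsilon_1,\cb'_0)$-block of a generalized Lawrence matrix. Two small clarifications: the step you flag as the main obstacle is not one, since "$C$ is $k$-pyramidal with $k$ the number of repetitions" is literally condition (1) of \cite[Theorem 3.8]{BDR}; and there is nothing to "read off" for $\cb'_0$ (the encoding vector of a free bouquet is arbitrary by definition, and reordering vectors cannot normalize an entry to $1$) --- the paper instead observes that \emph{any} $\cb'_0$ with $c'_{01}=1$ works, because the resulting $k\times k$ block is lower unitriangular and hence row-equivalent to $I_k$ without changing the kernel.
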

\begin{proof}  Let $A$
 be the  multiset configuration $\{\underbrace{\ab_1,\ldots,\ab_1}_{k_1+1},\ldots,\underbrace{\ab_n,\ldots,\ab_n}_{k_n+1}\}$ with the ground set 
$C=\{\ab_1,\ldots,\ab_n\}$
such that $X_A$ is self-dual and  $k=\sum_{i=1}^n k_i$. 
Then from \cite[Theorem 3.8]{BDR} the configuration $C$ is $k$-pyramidal
and  there exists a splitting of the lattice of characters $\ZZ^p$
as $S_1\times S_2$ such that, after reordering of the elements of $C$, it holds that $C={\mathcal C}_1\cup {\mathcal C}_2$, where 
${\mathcal C}_1$ is a basis of $\chi (S_1)$ and ${\mathcal C}_2\subset \chi (S_2)$ is a non-pyramidal configuration and the toric variety $X_{{\mathcal C}_2}$ is self-dual. $\chi (S_1)\simeq \ZZ^k$ and we can suppose that ${\mathcal C}_1=\{\varepsilon_1, \varepsilon_2, \cdots ,\varepsilon_k\}\subset \chi(S_1)=\{(v_1, v_2,  \cdots, v_k, 0,  \cdots, 0)|v_i\in \ZZ\}$. Let $ \chi(S_2)=\{(0, \cdots, 0, v_{k+1}, v_{k+2},  \cdots, v_p)|v_i\in \ZZ\}$.
  For the integer matrix ${\mathcal C}_2$ we know from \cite[Corollary 2.3]{PTV1} that there exists a generalized Lawrence matrix ${\mathcal C}_2'=\GLM(\ab'_1,\ldots,\ab'_s|\cb'_1,\ldots,\cb'_s)$
    such that $I_{{\mathcal C}_2}=I_{{\mathcal C}'_2}$, up to a permutation of column indices. Therefore ${\mathcal C}_2$ and ${\mathcal C}'_2$ have the same integer kernel, up to a permutation of the components, as well as the matrices
     $C=\left( \begin{array}{cc}
 I_k & {\bf 0} \\
 {\bf 0} & {\mathcal C}_2 
\end{array} \right)$ and   
    $\left( \begin{array}{cc}
 I_k & {\bf 0} \\
 {\bf 0} & {\mathcal C}_2' 
\end{array} \right)$. The second matrix after the row operations $R_i\leftrightarrow R_{d+i}$, $2\leq i\leq k$, and then  $R_i\rightarrow R_i-c'_{0i}R_1$, $2\leq i\leq k$,  which do not change the kernel can be brought to the form $C'=\GLM(\varepsilon_1,\ab'_1,\ldots,\ab'_s|\cb'_0,\cb'_1,\ldots,\cb'_s)$. By Lemma \ref{non_pyramid_pres_bouq} the configuration    $\ab'_1,\ldots,\ab'_s$ is non-pyramidal and by Theorem 
\ref{self_dual_GLM_nonpyramid} the $\sum_j c'_{ij}=0$ for all $i=1,\ldots,s$. Let $\sigma$ be the permutation such that $\Ker_{\ZZ}(C)=\Ker_{\ZZ}(\sigma(C'))$.
Let $\bb_1,\ldots,\bb_n$ be the columns of $C'$ and $A'$ be the multiset configuration $\{\underbrace{\bb_{\sigma (1)},\ldots,\bb_{\sigma (1)}}_{k_1+1},\ldots,\underbrace{\bb_{\sigma (n)},\ldots,\bb_{\sigma (n)}}_{k_n+1}\}$ then $\Ker_{\ZZ}(C)=\Ker_{\ZZ}(\sigma(C'))$ implies  $\Ker_{\ZZ}(A)=\Ker_{\ZZ}(A')$. Thus $X_A=X_{A'}$.
\end{proof}
\begin{ex}\label{example2}
 {\em
Let $A_1$ be a simple 1-pyramidal configuration, for example the one given by the columns of the matrix $\left( \begin{array}{ccccc}
 1 & 0 & 0 & 0 & 0\\
 0 & 4 & 5 & 6 & 7
\end{array} \right)$.

Take a primitive vector for each column such that the sum of the component for each primitive vector is zero, except possible for the first 
one  say $c'_0=(1, 3, 5, 7), c'_1=(1, -1), c'_2=(7,-4,-3), c'_3=(2023, 1, -2024), c'_4=(1, 3, -2, -2)$. Computing any integer solution for each
$1=\lambda_{i1}c'_{i1}+\cdots+\lambda_{im_i}c'_{im_i}$ (for $i=0$ a solution is $1=1\cdot 1+0\cdot 3+0\cdot 5+0\cdot 7$) 
we get a generalized Lawrence matrix $\GLM(\varepsilon_1, \ab'_1,\ldots,\ab'_4|\cb'_0,\ldots,\cb'_4)$ as follows
\[
C_1=\tiny{
\left( \begin{array}{cccccccccccccccc}
1 & 0 & 0 & 0 & 0 & 0 & 0 & 0 & 0 & 0 & 0 & 0 & 0 & 0 & 0 & 0 \\
0 & 0 & 0 & 0 & 4 & 0 & 5 & 0 & 10 & 0 & 6 & 0 & 0 & 7 & 7 & 0  \\
-3 & 1 & 0 & 0 & 0 & 0 & 0 & 0 & 0 & 0 & 0 & 0 & 0 & 0 & 0 & 0 \\
-5 & 0 & 1 & 0 & 0 & 0 & 0 & 0 & 0 & 0 & 0 & 0 & 0 & 0 & 0 & 0 \\
-7 & 0 & 0 & 1 & 0 & 0 & 0 & 0 & 0 & 0 & 0 & 0 & 0 & 0 & 0 & 0 \\
0 & 0 & 0 & 0 & 1 & 1 & 0 & 0 & 0 & 0 & 0 & 0 & 0 & 0 & 0 & 0  \\
0 & 0 & 0 & 0 & 0 & 0 & 4 & 7 & 0 & 0 & 0 & 0 & 0 & 0 & 0 & 0  \\
0 & 0 & 0 & 0 & 0 & 0 & 3 & 0 & 7 & 0 & 0 & 0 & 0 & 0 & 0 & 0  \\
0 & 0 & 0 & 0 & 0 & 0 & 0 & 0 & 0 & -1 & 2023 & 0 & 0 & 0 & 0 & 0  \\
0 & 0 & 0 & 0 & 0 & 0 & 0 & 0 & 0 & 2024 & 0 & 2023 & 0 & 0 & 0 & 0  \\
0 & 0 & 0 & 0 & 0 & 0 & 0 & 0 & 0 & 0 & 0 & 0 & -3 & 1 & 0 & 0  \\
0 & 0 & 0 & 0 & 0 & 0 & 0 & 0 & 0 & 0 & 0 & 0 & 2 & 0 & 1 & 0  \\
0 & 0 & 0 & 0 & 0 & 0 & 0 & 0 & 0 & 0 & 0 & 0 & 2 & 0 & 0 & 1  
\end{array} \right) .}
\]

Let $E$ be any 16+4 multiset configuration with the ground set the set of columns of $C_1$. Theorem \ref{self_dual_GLM_pyramid} proves that $X_E$ is a self-dual projective toric variety.
Theorem \ref{converse_self_dual_GLM_pyramid} says that all multiset configurations such that $X_E$ is a self-dual projective toric variety are taken in this way.
Note that if we repeat once all first four columns the corresponding variety is non-pyramidal, while in all other cases is s-pyramidal, where s is
less than or equal to four and depends on the choices of the four vectors that we repeat, see the examples \ref{Example A}, \ref{Example B}. This is the subject of the next Theorem.}
\end{ex}
\begin{thm} \label{10} Let 
\[
A=\{\underbrace{\ab_1,\ldots,\ab_1}_{k_1+1},\ldots,\underbrace{\ab_n,\ldots,\ab_n}_{k_n+1}\}
\]
a multiset configuration with $k_1,\ldots,k_n$ nonnegative integers with the ground set $C=\{\ab_1,\ldots,\ab_n\}\subset \ZZ^d$,  $\ab_i\neq\ab_j$ for all $i\neq j$ such that $X_A$ is self dual. Let $k=\sum_{i=1}^n k_i$ be the total number of repetitions.  Then $A$ is an $s$-pyramidal configuration, with $0\leq s\leq k$. In particular:\\
(1) $A$ is a $k$-pyramidal configuration if and only if  $k_i=0$ for every ${\bf a}_i$ free vector of the $C$ configuration 
and \\
(2) $A$ is  a non-pyramidal configuration if and only if $k_i=1$ for every ${\bf a}_i$  free vector of the $C$ configuration 
and $k_i=0$ for every ${\bf a}_i$ non-free vector of the $C$ configuration.
\end{thm}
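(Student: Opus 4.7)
The plan is to reduce to the generalized Lawrence matrix form via Theorem~\ref{converse_self_dual_GLM_pyramid} and then read off the free columns of the multiset $A$ combinatorially. By that theorem, and since $X_A=X_{A'}$ preserves $\Ker_{\ZZ}$ up to a column permutation (and hence also the number of free columns), after a renumeration we may assume that $C$ is the set of columns of a matrix
\[
C'=\GLM(\varepsilon_1,\ab'_1,\ldots,\ab'_s\mid \cb'_0,\cb'_1,\ldots,\cb'_s)
\]
with $k=\sum_{i=1}^n k_i=|A|-|C|$ and $\{\ab'_1,\ldots,\ab'_s\}$ a non-pyramidal configuration. From the block structure of $C'$---specifically the unipotent upper triangular diagonal block $\bigl(\begin{smallmatrix}\varepsilon_1\\ C_0\end{smallmatrix}\bigr)$ appearing in the proof of Theorem~\ref{self_dual_GLM_pyramid}---the first $k$ columns of $C'$ are free, while Lemma~\ref{non_pyramid_pres_bouq} applied to the remaining block $\GLM(\ab'_1,\ldots,\ab'_s\mid \cb'_1,\ldots,\cb'_s)$ shows the other $n-k$ columns are non-free. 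Hence $C$ has exactly $k$ free vectors.

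Next I would identify the free columns of $A$ by two short lifting/projection arguments. \emph{Lift:} if $\ab_i$ is non-free in $C$ there is $\vb\in\Ker_{\ZZ}(C)$ with $v_i\neq 0$; for any chosen copy of $\ab_i$ in $A$ I produce an element of $\Ker_{\ZZ}(A)$ supported at that copy by placing $v_i$ in that position, $0$ at the other copies of $\ab_i$, and distributing each $v_j$ arbitrarily among the copies of $\ab_j$ for $j\neq i$. Thus every copy of a non-free vector of $C$ is non-free in $A$. \emph{Project:} if $\ab_i$ is free in $C$, then summing any $\vb'\in\Ker_{\ZZ}(A)$ over the copy positions of each $\ab_j$ defines a linear map $\Ker_{\ZZ}(A)\to\Ker_{\ZZ}(C)$ whose $i$-th coordinate vanishes. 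When $k_i=0$ this forces the unique copy of $\ab_i$ to carry the entry $0$ in every element of $\Ker_{\ZZ}(A)$, so that copy is free in $A$; when $k_i\geq 1$ the differences $e_p-e_q$ between two distinct copy positions of $\ab_i$ lie in $\Ker_{\ZZ}(A)$, so each copy is non-free.

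Combining the two cases, the free columns of $A$ are precisely the single copies of the free vectors of $C$ with $k_i=0$; setting $F=\{i:\ab_i\text{ is free in }C,\ k_i=0\}$, the configuration $A$ is $|F|$-pyramidal and $0\leq |F|\leq k$, which gives the range in the main assertion. For (1), $A$ is $k$-pyramidal if and only if $|F|=k$, equivalently $k_i=0$ for every free $\ab_i$. For (2), $A$ is non-pyramidal if and only if $|F|=0$, equivalently every free $\ab_i$ satisfies $k_i\geq 1$; combined with $\sum_i k_i=k$ and the fact that $C$ has exactly $k$ free vectors, this forces $k_i=1$ on each free vector and $k_i=0$ on each non-free vector, with the converse immediate.

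The main obstacle I anticipate is making the two arguments in the second paragraph airtight: that the lift $\vb\mapsto\vb'$ can genuinely be steered to any prescribed copy of $\ab_i$, and that the sum-over-copies map is a well-defined linear map $\Ker_{\ZZ}(A)\to\Ker_{\ZZ}(C)$ whose $i$-th coordinate equals the sum of the entries of $\vb'$ over the positions of $\ab_i$. Once these are carefully justified, the counting in the last paragraph reduces to elementary arithmetic on $\sum_i k_i=k$ together with the equality between this number and the count of free vectors of $C$.
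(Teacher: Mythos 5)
Your proof is correct and follows essentially the same route as the paper: the key step in both is the claim that a copy of $\ab_i$ is free in $A$ if and only if $\ab_i$ is free in $C$ and $k_i=0$, followed by the same counting argument $k=\sum_i k_i\ge\sum_{\ab_i\ \mathrm{free}}k_i\ge k$ for part (2). The only cosmetic differences are that you obtain ``$C$ has exactly $k$ free vectors'' by a detour through Theorem~\ref{converse_self_dual_GLM_pyramid} where the paper cites \cite[Theorem 3.8]{BDR} directly, and you use the kernel-support characterization of freeness where the paper uses the equivalent covector one.
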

\begin{proof} By \cite[Theorem 3.8]{BDR} $C$ is $k$-pyramidal, which means it has $k$ free vectors. 
We claim that the vector ${\bf a}_i$ is free in $A$ if and only if  the vector ${\bf a}_i$  is free 
in $C$ and $k_i=0$.\\
Suppose that  the vector ${\bf a}_i$ is free in $A$ then there exist ${\bf u}\in \ZZ^d$ such that ${\bf u}\cdot {\bf a}_i=1$ for only one vector of $A$ and ${\bf u}\cdot {\bf a}_j=0$ for all the rest.
Thus $k_i=0$ and ${\bf a}_i$  is free 
in $C$. For the other direction, suppose that
the vector ${\bf a}_i$  is free in $C$, then there exist ${\bf u}\in \ZZ^d$ such that ${\bf u}\cdot {\bf a}_i=1$ and ${\bf u}\cdot {\bf a}_j=0$ for all $j\not =i$. Then ${\bf u}\cdot {\bf a}_i=1$ for only one vector in $A$, since $k_i=0$, and ${\bf u}\cdot {\bf a}_j=0$ for all other vectors in $A$. Thus  ${\bf a}_i$ is free in $A$.

From the previous claim we conclude that the number of free vectors in $A$ is less than or equal to the number of free vectors in $C$, thus $s\leq k$. The  equality $s=k$ happens if and only if $k_i=0$ for every free vector of $C$.

Suppose now that $A$ is 
non-pyramidal configuration therefore $k_i\ge 1$ for all free vectors ${\bf a}_i$. Thus
$$k=\sum_{i=1}^n k_i\ge \sum_{{\bf a}_i \text{free}} k_i\ge k.$$
Therefore $k=\sum_{i=1}^n k_i= \sum_{{\bf a}_i \text{free}} k_i= k$ which means that $k_i=0$ if ${\bf a}_i$ is a non-free vector and $k_i=1$ if ${\bf a}_i$ is a free vector of the $C$ configuration. For the converse, 
suppose that $k_i=1$ if ${\bf a}_i$ is a free vector of $C$ then there is no free vector in $A$ and thus the $A$ configuration is non-pyramidal.
\end{proof}

\section{Toric bases and combinatorial properties of self-dual toric varieties} \label{4}

A \textit{Markov basis} of an $m\times n$ matrix $A$ is a finite subset  $M$ of $\Ker_{\mathbb{Z}}(A)$ such that whenever $\textbf{w}, \textbf{v} \in \mathbb{N}^n$
and $\textbf{w}- \textbf{v} \in \Ker_{\mathbb{Z}}(A)$ there exists a set $\{\textbf{u}_i | i = 1, \ldots , l\}$  that \textit{connects} $\textbf{w}$
to $\textbf{v}$, where  either  $\textbf{u}_i \in M$ or the opposite  $-\textbf{u}_i \in M$. To connect $\textbf{w}$
to $\textbf{v}$ means that $\textbf{w}- \textbf{v} = \sum_{i=1}^{l} \textbf{u}_i$, and $(\textbf{w}-\sum_{i=1}^{p} \textbf{u}_i) \in \mathbb{N}^n$ for all $1 \leq p \leq l$. A Markov basis of $A$ is {\em minimal} if no proper subset of it is a Markov basis of $A$. A renowned result of Diaconis and Sturmfels
\cite[Theorem 3.1]{DS} says that  $\mathcal{M}$ is a minimal Markov basis  of $A$ if and only if the set $\{\textbf{x}^{u^+}-\textbf{x}^{u^-}|\textbf{u}\in \mathcal{M}\} $ forms a minimal system of binomial generators for the toric ideal $I_A$.
The universal Markov basis $\mathcal{M}(A)$ of $A$ is the union of all minimal Markov bases of $A$, up to identification of opposite vectors. The set of indispensable elements $\mathcal{S}(A)$ is the intersection of all different minimal Markov bases, via the same identification.   The set of indispensable binomials  $\mathcal{S}(I_A)$ of the toric ideal is the set  $\{\textbf{x}^{u^+}-\textbf{x}^{u^-}|\textbf{u}\in \mathcal{S}(A) \}$, that is the intersection  of all minimal systems of binomial generators of $I_A$, up to identification of opposite binomials.

We say that ${\bf{u}}={\bf{v}}+_{sc}{\bf{w}}$ is a semiconformal decomposition of the vector {\bf{u}} in two vectors {\bf{v}}, {\bf{w}}, if {\bf{u}}={\bf{v}}+{\bf{w}} and $v_i>0$ implies $w_i\geq 0$ and $w_i<0$ implies that $v_i\leq 0$ for every $1\leq i\leq n$,  a concept introduced first in \cite{HS}, where by $v_i,w_i$ we denote the $i$-th component of the vectors ${\bf v}$ and ${\bf w}$, respectively. We call the decomposition  proper if both ${\bf v}, {\bf w}$ are nonzero.   From \cite[Proposition 1.1]{CTVJA}, we know that the set of the indispensable elements $\mathcal{S}(A)$ of $A$ consists of all nonzero vectors in $\Ker_{\ZZ}(A)$ with no proper semiconformal decomposition.

We say that ${\bf u}={\bf v}+_{ssc} {\bf w}$ is a strongly semiconformal decomposition of the vector $\ub$ in two vectors {\bf{v}}, {\bf{w}}, if ${\bf u}={\bf v}+{\bf w}$, ${\bf u}^+>{\bf v}^+$ and ${\bf u}^->{\bf w}^-$. Recall that for two vectors ${\bf{u}}, {\bf{v}}\in \mathbb{Z}^n$ we say that ${\bf{u}}\geq{\bf{v}}$ if ${\bf{u}}-{\bf{v}}\in\mathbb{N}^n$, where the inequality is strict if ${\bf{u}}\neq{\bf{v}}$.  Note that strongly semiconformal decomposition can be of more than two vectors, see \cite[Section 1]{CTVJA}. The universal Markov basis of $A$, $\mathcal{M}(A)$, consists of all nonzero vectors in $\Ker_{\ZZ}(A)$ with no proper strongly semiconformal decomposition, 
\cite[Proposition 1.4]{CTVJA}.

\begin{defi} A toric ideal $I_A$ is called strongly robust 
if the Graver basis of $I_A$ is equal to a minimal Markov basis of $I_A$. 
\end{defi}  

It follows from the definition, as we have seen in the Introduction, that for strongly robust ideals 
the following sets are identical: the set of indispensable elements, any minimal Markov basis,
the universal Markov basis, any reduced Gr\"obner basis,
the universal Gr\"obner basis and  the Graver basis of $A$.

\begin{thm}\label{four_sets}
Let $A=\{\ab_1,\ldots,\ab_n\}$ be a non-pyramidal configuration such that $X_A$ is a self-dual projective toric variety. Then the toric ideal is strongly robust.
\end{thm}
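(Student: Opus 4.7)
The plan is to prove that every element of the Graver basis $\Gr(A)$ is indispensable. Since the indispensable elements lie in every minimal Markov basis and every minimal Markov basis is contained in $\Gr(A)$, this will immediately give $\Gr(A) = \mathcal{S}(A) = $ the unique minimal Markov basis, which is precisely strong robustness. By \cite[Proposition 1.1]{CTVJA}, an element of $\Ker_{\ZZ}(A)$ is indispensable if and only if it admits no proper semiconformal decomposition, so the task reduces to showing: no element of $\Gr(A)$ admits a proper semiconformal decomposition.

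First I would invoke Theorem~\ref{co} to replace $A$ by a generalized Lawrence matrix $A'=\GLM(\ab'_1,\ldots,\ab'_s\,|\,\cb'_1,\ldots,\cb'_s)$ with $X_A=X_{A'}$ (and hence the same toric ideal up to relabeling), where $\ab'_1,\ldots,\ab'_s$ is non-pyramidal and $\sum_j c'_{ij}=0$ for all $i$. These vanishing sums, together with the full-support condition on each $\cb'_i$, force every $\cb'_i$ to contain both a positive and a negative entry, so by Theorem~\ref{inverse_construction} and \cite[Lemma 1.6]{PTV1} every bouquet of $A'$ is \emph{mixed}.

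Next I use Theorem~\ref{all_is_well} to parametrize $\Ker_{\ZZ}(A')$ as $D(\Ker_{\ZZ}(A'_B))$. Let $\ub\in\Ker_{\ZZ}(A'_B)$ with $D(\ub)\in\Gr(A')$ and suppose $D(\ub)=\vb+\wb$ is a semiconformal decomposition inside $\Ker_{\ZZ}(A')$. Using the isomorphism I write $\vb=D(\vb_0)$ and $\wb=D(\wb_0)$; in the block corresponding to bouquet $B_j$ the entries of $\vb$ and $\wb$ are $c_{jt}(v_0)_j$ and $c_{jt}(w_0)_j$ respectively. The key claim is that $(v_0)_j$ and $(w_0)_j$ cannot have opposite nonzero signs. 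Indeed, mixedness yields indices $t,t'$ with $c_{jt}>0$ and $c_{jt'}<0$; in the configuration $(v_0)_j>0$, $(w_0)_j<0$ the index $t$ gives $v_{jt}>0$ and $w_{jt}<0$, while in the configuration $(v_0)_j<0$, $(w_0)_j>0$ the index $t'$ gives the same violation. Both contradict the semiconformality condition.

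Consequently, in every block the entries $(v_0)_j$ and $(w_0)_j$ share a sign or one of them is zero, which upgrades $D(\ub)=D(\vb_0)+D(\wb_0)$ to a \emph{conformal} decomposition. Since $D(\ub)$ is a Graver element, no proper conformal decomposition exists, forcing $\vb_0=0$ or $\wb_0=0$ and making the original semiconformal decomposition improper; hence $D(\ub)$ is indispensable. The main technical hurdle is the sign case-analysis in the key claim: the semiconformal condition only forbids indices where $v_i>0$ and $w_i<0$, so the two opposite-sign configurations at the bouquet level must be handled separately, using respectively a positive and a negative entry of $\cb'_j$ whose existence is guaranteed by the vanishing-sum condition coming from self-duality.
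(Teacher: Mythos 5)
Your proof is correct and rests on the same key observation as the paper's: for a non-pyramidal self-dual configuration, every bouquet $B$ satisfies $\sum_i (c_B)_i=0$ (Theorem~\ref{non_pyramid}), and since $\cb_B$ is nonzero with positive first nonzero entry, this forces every bouquet to be mixed. The difference lies in what happens next: the paper simply cites \cite[Corollary 4.4]{PTV1} (all bouquets mixed implies strongly robust), whereas you re-prove that implication from scratch, using the isomorphism $D$ to show that any semiconformal decomposition of a Graver element must be blockwise conformal and hence trivial, so every Graver element is indispensable. Your sign analysis is right --- the two opposite-sign configurations at the bouquet level are each killed by a positive, respectively negative, entry of $\cb_{B_j}$, whose existence is exactly what mixedness provides --- so your argument is a correct, self-contained substitute for the cited corollary, at the cost of length. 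Two minor points: the detour through Theorem~\ref{co} to a generalized Lawrence matrix is unnecessary, since Theorem~\ref{non_pyramid} applied to $A$ itself already yields the vanishing sums and hence mixedness (this is the paper's route); and your opening reduction tacitly uses that every minimal Markov basis is contained in $\Gr(A)$, which holds here because $(1,\ldots,1)$ lies in the row span of $A$, so that $\Ker_{\ZZ}(A)\cap\NN^n=\{{\mathbf{0}}\}$ --- a hypothesis worth making explicit.
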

\begin{proof}
Since $A$ is non-pyramidal configuration then by Theorem~\ref{non_pyramid} we obtain $\sum_i (c_B)_i=0$ for every bouquet $B$ of $A$. 
The latter condition implies that each vector $\cb_B$ has at least one positive and one negative component, 
since by definition $\cb_B$ has the first nonzero component positive. Thus each bouquet $B$ is mixed by Theorem \ref{non_pyramid}, 
and applying \cite[Corollary 4.4]{PTV1} we obtain that the toric ideal is strongly robust. 
\end{proof}

\begin{ex}
{\em Let $A=(4 \ 5 \ 6 \ 7)$ and $C$ be the generalized Lawrence matrix defined in the Example \ref{example1}.
 Then the matrix $A=(4 \ 5 \ 6 \ 7)$ has 29 elements in the Graver basis, those are:  
\[
\Gr(A)=\{ (5, -4, 0,  0),  (1, -2,  1,  0),  (2, -3,  0,  1),  (4, -2, -1,  0),  (3, -1,  0, -1), \]
\[ (1, -1, -1,  1),  
(2,  1, -1, -1), (1,  2,  0, -2),  (3,  0, -2,  0),   (2,  0,  1, -2), (5,  0, -1, -2),\]
\[  (4,  1,  0, -3),  (7,  0,  0, -4),
(0,  1, -2,  1),  (2,  2, -3,  0),  (1,  0, -3,  2),  (1,  3, -2, -1), \]
 \[ (1,  1,  2, -3),  (0,  4, -1, -2), (0,  3,  1, -3),
 (1,  4, -4,  0),  (1,  0,  4, -4),  (1, -5,  0,  3), \]
\[ (0,  5, -3, -1),  (0,  2,  3, -4),  (0,  6, -5,  0),  (0,  1,  5, -5),
(0,  7,  0, -5),  (0,  0,  7, -6)\}.
\]

 Theorem \ref{four_sets} says that $I_{C}$ is strongly robust. The Graver basis by \cite[Theorem 3.7]{PTV1} it has 29 elements all in the form
$D({\bf u})=u_1\gamma_1+u_2\gamma_2+u_3\gamma_3+u_4\gamma_4=(u_1,-u_1,7u_2,-4u_2,-3u_2,2023u_3,u_3,-2024u_3,u_4,3u_4,-2u_4,-2u_4)$, \\ where ${\bf u}=(u_1, u_2, u_3, u_4) \in \Gr(A)$ and $\gamma_1=(1,-1,0,0,0,0,0,0,0,0,0,0), \\ \gamma_2=(0,0,7,-4,-3,0,0,0,0,0,0,0), \ \gamma_3=(0,0,0,0,0,2023,1,-2024,0,0,0,0), \\ \gamma_4=(0,0,0,0,0,0,0,0,1,3,-2,-2)$ are the vectors with support $c'_1, c'_2, c'_3, c'_4$ of the Example \ref{example1}. For example, for the element $(1,3,-2,-1)\in \Gr(A)$ we have the corresponding
Graver basis element $$D(1,  3, -2, -1)=(1, -1, 21, -12, -9, -4046, -2, 4048, -1, -3, 2, 2)\in \Gr(C).$$
This element, $D(1,  3, -2, -1)$, is also indispensable thus belongs to every Markov basis and every reduced 
Gr\"obner basis, as also the other 28 elements in the form $D({\bf u})$. This property of $I_C$ is not true for $I_A$, for example $(1,3,-2,-1)\in \Gr(A)$ is not indispensable, since it has a proper semi-conformal decomposition: $$(1,3,-2,-1)=(1,2,0,-2)+_{sc}(0,1,-2,1).$$}
\end{ex}

Note that if we drop the assumption of non-pyramidal configuration then the conclusion is no longer true. 
However, we still have an important combinatorial information. The toric ideal is {\em weakly robust}.
We recall that the universal Markov basis of $I_A$, $\mathcal M(A)$, is the union of all minimal generating sets of $I_A$ modulo the $\pm$ sign (see \cite[Definition 3.1]{HS}) and it has a useful algebraic characterization described in \cite[Proposition 1.4]{CTVJA}, similar to the conformality for Graver basis. 
\begin{defi} A toric ideal $I_A$ is called weakly robust 
if the Graver basis of $I_A$ is equal to the universal Markov basis of $I_A$. 
\end{defi}  

We  examine the relation between the Graver basis of a multiset configuration $A$ with ground set $C$ and the Graver basis of $C$. Let $C=\{\ab_1,\ldots,\ab_n\}\subset \ZZ^d$ be such that $\ab_i\neq\ab_j$ for all $i\neq j$ and 
\[
A=\{\underbrace{\ab_1,\ldots,\ab_1}_{k_1+1},\ldots,\underbrace{\ab_n,\ldots,\ab_n}_{k_n+1}\}
\]
a multiset configuration with $k_1,\ldots,k_n$ nonnegative integers. Denote by $k$ the nonnegative integer
$\sum_{i=1}^n k_i$. If $k=0$ then $A=C$, thus $\Gr(A)=\Gr(C)$. In the case that $k\geq 1$, consider $I_C\subset K[x_1,\ldots,x_n]$ 
and $$I_A\subset K[y_{11},\ldots,y_{1k_1+1},\ldots,y_{n1},\ldots,y_{nk_n+1}].$$ 
 
 Let $\pi $ be the restriction 
from $\Ker_{\ZZ}(A)$  to $ \Ker_{\ZZ}(C)$ 
of the linear map \[
\pi(u_{11},\ldots,u_{1,k_1+1},\ldots,u_{n1},\ldots,u_{n,k_n+1})=(\sum_{i=1}^{k_1+1}u_{1i},\ldots,\sum_{i=1}^{k_n+1}u_{ni})
\] from $\QQ^{n+k}$ to $ \QQ^n$.

Note that the vectors in $A$ are in n-blocks. All the vectors in the i-th block are equal to each other and different from the elements in the other blocks.
The size of the $i-$th block is $k_i+1$.
Also the components of ${\bf u'}$ in $\Ker_{\ZZ}(A)$ are partitioned in n-blocks. The components in the i-th block are denoted by $u_{i1}, \cdots, u_{ik_i+1}$.

For ${\bf u}\in \Ker_{\ZZ}(C)$ we denote by $\pi^{-1}({\bf u})_+$ all elements ${\bf u'}$ in $\Ker_{\ZZ}(A)$ such that $\pi({\bf u'})={\bf u}$
and all nonzero components in the i-block of ${\bf u'}$ have the {\em same sign}, for every $i$-block,
this means that if $u_i>0$ then $u'_{ij}\ge 0$, if $u_i=0$ then $u'_{ij}=0$ and if $u_i<0$ then $u'_{ij}\le 0$, for all possible $j$.
Let $Q_A$ be the set of all the elements ${\bf c}_{is,it}$ in $\Ker_{\ZZ}(A)$ in the form: all components of ${\bf c}_{is,it}$ are zero 
except an 1 in the $is$ position and a $-1$ in the $it$ position, $1\leq s, t \leq k_i+1.$ 

 \begin{thm}\label{graver} The Graver basis of $I_A$ is $$(\bigcup_{{\bf u}\in Gr(C)} \pi^{-1}({\bf u})_+)\bigcup Q_A.$$
\end{thm}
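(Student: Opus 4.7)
The plan is to use the standard characterization of $\Gr(I_A)$ as the set of nonzero vectors in $\Ker_{\ZZ}(A)$ admitting no proper conformal decomposition, and to prove the two inclusions separately. Throughout, I will use the identity $A{\bf u}' = C\pi({\bf u}')$, which shows that $\pi$ really does carry $\Ker_{\ZZ}(A)$ into $\Ker_{\ZZ}(C)$.

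For the inclusion $(\supseteq)$, I first observe that each ${\bf c}_{is,it} \in Q_A$ lies in $\Gr(I_A)$: it has only two nonzero components $+1$ and $-1$ of opposite signs, so any nonzero summand in a conformal decomposition would be supported on a single nonzero column of $A$, which cannot lie in $\Ker_{\ZZ}(A)$. Next, given ${\bf u}' \in \pi^{-1}({\bf u})_+$ with ${\bf u} \in \Gr(C)$, I suppose a proper conformal decomposition ${\bf u}' = {\bf v}' +_c {\bf w}'$ and push it down to $C$: within each block $i$ all $u'_{ij}$ share the sign of $u_i$, so by conformality the same holds for all $v'_{ij}$ and $w'_{ij}$, and their block-sums retain the sign of $u_i$. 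Thus ${\bf u} = \pi({\bf v}') +_c \pi({\bf w}')$ is conformal in $\Ker_{\ZZ}(C)$; since ${\bf u} \in \Gr(C)$ one summand, say $\pi({\bf v}')$, vanishes, and the uniform-sign condition then forces ${\bf v}' = 0$, contradicting properness.

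For the converse inclusion $(\subseteq)$, I take ${\bf u}' \in \Gr(I_A)$ and split into two cases. If some block $i$ has $u'_{is} > 0$ and $u'_{it} < 0$, then ${\bf u}' = {\bf c}_{is,it} + ({\bf u}' - {\bf c}_{is,it})$ is a conformal decomposition (the $\pm 1$ entries of ${\bf c}_{is,it}$ agree in sign with $u'_{is}$ and $u'_{it}$, and no other entry has its sign changed), so properness forces ${\bf u}' = {\bf c}_{is,it} \in Q_A$. Otherwise every block of ${\bf u}'$ has uniform signs, so ${\bf u}' \in \pi^{-1}({\bf u})_+$ for ${\bf u} := \pi({\bf u}')$, and it remains to show ${\bf u} \in \Gr(C)$. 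Assuming a proper conformal decomposition ${\bf u} = {\bf v} +_c {\bf w}$, I lift it: in each block $i$ with $u_i > 0$ the integers $u'_{ij} \geq 0$ sum to $u_i \geq v_i \geq 0$, so integers $0 \leq v'_{ij} \leq u'_{ij}$ with $\sum_j v'_{ij} = v_i$ exist, and analogously for $u_i < 0$; setting ${\bf w}' := {\bf u}' - {\bf v}'$ then yields a conformal decomposition of ${\bf u}'$ which is proper, since $\pi({\bf v}') = {\bf v} \neq 0 \neq {\bf w} = \pi({\bf w}')$, contradicting ${\bf u}' \in \Gr(I_A)$.

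The main technical point will be the lifting step in the last case, but it reduces to a straightforward integer-partition argument once the uniform-sign structure of the blocks is used. The rest is careful bookkeeping of how conformality behaves under $\pi$ and under the addition of the two-support vectors in $Q_A$.
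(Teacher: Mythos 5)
Your proposal is correct and follows essentially the same route as the paper's proof: both directions are handled by pushing conformal decompositions down through $\pi$ (using the uniform block signs) and lifting them back up via a block-wise integer-partition argument, with the mixed-sign case reduced to the vectors in $Q_A$. The only cosmetic difference is that you verify $Q_A\subseteq \Gr(I_A)$ directly from the definition of conformality, whereas the paper cites the fact that circuits belong to the Graver basis; your phrasing of the properness of the lifted decomposition (via $\pi({\bf v}')={\bf v}\neq 0\neq {\bf w}=\pi({\bf w}')$) is in fact slightly cleaner than the paper's.
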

\begin{proof}
The elements in $Q_A$ are circuits, since they are homogeneous of degree one and have support of cardinality two. Therefore they are
in the Graver basis, see \cite[Proposition 4.11]{St}. 

Let ${\bf u}'\in \pi^{-1}({\bf u})_+$ for some ${\bf u}\in Gr(C)$. Suppose that ${\bf u}'={\bf v}'+_c{\bf w}'$,   $\pi $ is linear therefore
$\pi ({\bf u}')=\pi ({\bf v}')+ \pi ({\bf w}')$.
Note that all non-zero components in the i-block of ${\bf u}$ have the same sign, since ${\bf u}'\in \pi^{-1}({\bf u})_+$. Then the conformality
of the sum ${\bf u}'={\bf v}'+_c{\bf w}'$ implies that all non-zero components in the i-th blocks of ${\bf u}', {\bf v}', {\bf w}'$ have the same sign. This
imply that the  sum $\pi ({\bf u}')=\pi ({\bf v}')+_c \pi ({\bf w}')$  is conformal. But ${\bf u}=\pi ({\bf u}')$ is in the Graver of $C$ therefore either $\pi ({\bf v}')$ or 
$\pi ({\bf w}')$
is zero. But  all the non-zero components in the i-th block have the same sign, for every $i\in [n]$, therefore if  $\pi ({\bf v}')={\bf 0}$ then ${\bf v}'={\bf 0}$
and if  $\pi ({\bf w}')={\bf 0}$ then ${\bf w}'={\bf 0}$. 
 Therefore the sum  ${\bf u}'={\bf v}'+_c{\bf w}'$ is not proper and ${\bf u}' \in Gr(A)$.
Thus $(\bigcup_{{\bf u}\in Gr(C)} \pi^{-1}({\bf u})_+)\bigcup Q_A\subset Gr(A)$. 

Let ${\bf u}'\in Gr(A)$ which is not in $Q_A$. First we claim that all non-zero components in any i-th block of ${\bf u}'$ are having the same sign.
Suppose not, then let $ u'_{is}>0$ and  $u'_{it}<0$ for some $s, t$, then define ${\bf u}''$ to have all components the same as ${\bf u}'$ except
$is$ and $it$ and $ u''_{is}= u'_{is}-1\geq 0$, $ u''_{it}= u'_{it}+1\leq 0$. Then the sum ${\bf u}'={\bf u}''+{\bf c}_{is,it}$ is conformal and both
${\bf u}'',{\bf c}_{is,it}$ are in the kernel of A. Note that ${\bf c}_{is,it}$ has all components zero, except the $is$ which is 1 and the $it$ which is -1.
 The ${\bf u}'$ is not in $Q_A$ thus both ${\bf u}'', {\bf c}_{is,it}$ are different from zero.
 A contradiction, since  ${\bf u}'\in Gr(A)$. Therefore  all non-zero elements in the i-th block of ${\bf u}'$ are having the same sign, for every $i\in [n]$.

Next  we claim that $\pi({\bf u}')\in Gr(C)$. Suppose that $\pi({\bf u}')={\bf u}={\bf v}+_c{\bf w}$, where ${\bf v},{\bf w}$ are in the kernel of $C$. Then look at
the i-th block of ${\bf u}'$ 
we have $\sum u'_{ij}=u_i= v_i+w_i$ and since the sum is conformal all nonzero components are of the same sign. 

Suppose first that $u_i>0$.   In the case  $v_i=u_i=\sum_{j=1}^{k_i+1} u'_{ij}$ we set $v_{ij}=u'_{ij}$ and $w_{ij}=0$. Otherwise 
there exist a $t$ such that $\sum_{j=1}^t u'_{ij}\leq v_i<\sum_{j=1}^{t+1} u'_{ij}$ for $0\leq t\leq k_i+1$, where for $t=0$ we set $\sum_{j=1}^t u'_{ij}=0$. Set $v_{ij}=u'_{ij}$ and $w_{ij}=0$ for $j\leq t$,
$v_{it+1}=v_i-\sum_{j=1}^t u'_{ij}\ge 0$ and $w_{ij}=\sum_{j=1}^{t+1} u'_{ij}-v_i>0$ and $v_{ij}=0$ and $w_{ij}=u'_{ij}$ for $j> t$.

In the case that $u_i=0$ then $v_i=0=w_i$ from the conformality. We set all $v_{ij}=0=w_{ij}.$

In the last case  $u_i<0$.   In the case  $v_i=u_i=\sum_{j=1}^{k_i+1} u'_{ij}$ we set $v_{ij}=u'_{ij}$ and $w_{ij}=0$. Otherwise 
there exist a $t$ such that $\sum_{j=1}^t u'_{ij}\geq v_i>\sum_{j=1}^{t+1} u'_{ij}$ for $0\leq t\leq k_i+1$, where for $t=0$ we set $\sum_{j=1}^t u'_{ij}=0$. Set $v_{ij}=u'_{ij}$ and $w_{ij}=0$ for $j\leq t$,
$v_{it+1}=v_i-\sum_{j=1}^t u'_{ij}\le 0$ and $w_{ij}=\sum_{j=1}^{t+1} u'_{ij}-v_i<0$ and $v_{ij}=0$ and $w_{ij}=u'_{ij}$ for $j> t$.

Let ${\bf v}'$ the vector with components 
$v_{ij}$ and ${\bf w}'$  the vector with components 
$w_{ij}$. 
Then ${\bf u}'=
{\bf v}'+_c{\bf w}'$ which implies that either ${\bf v}'={\bf 0}$ or ${\bf w}'={\bf 0}$. 
But from the definition  ${\bf v}'$ is different from zero, since ${\bf u}'\not = {\bf 0}$. Thus 
${\bf w}'={\bf 0}$. But this can happen only if ${\bf w}={\bf 0}$, 
which means that the conformal sum  $\pi({\bf u}')={\bf v}+_c{\bf w}$
is never proper. Therefore $\pi({\bf u}')\in Gr(C)$.  Set $\pi({\bf u}')={\bf u}$ thus  ${\bf u}'\in \pi^{-1}({\bf u})_+$ for a ${\bf u}\in Gr(C)$.
\end{proof}

\begin{thm}\label{all_good}
Let $A$ be any configuration such that $X_A$ is a self-dual projective toric variety. Then the toric ideal $I_A$ is weakly robust.  
\end{thm}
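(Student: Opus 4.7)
The plan is to combine the structural Theorem~\ref{converse_self_dual_GLM_pyramid}, the strong robustness of the non-pyramidal core (Theorem~\ref{four_sets}), and the description of the Graver basis of a multiset lifting (Theorem~\ref{graver}), with the semiconformal characterizations of $\mathcal{S}(A)$ and $\mathcal{M}(A)$ recalled in this section. By Theorem~\ref{converse_self_dual_GLM_pyramid} we may replace $A$ by a multiset configuration with ground set $C=\{\varepsilon_1,\ldots,\varepsilon_k\}\cup C'$, where $C'$ is non-pyramidal self-dual. Since any element of $\Ker_\ZZ(C)$ vanishes on the free coordinates, $\Gr(C)=\Gr(C')$, and Theorem~\ref{four_sets} identifies this with $\mathcal{S}(C')=\mathcal{S}(C)$: every Graver element of $C$ is indispensable, i.e., has no proper semiconformal decomposition. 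By Theorem~\ref{graver}, $\Gr(A)=Q_A\cup\bigcup_{\ub\in\Gr(C)}\pi^{-1}(\ub)_+$. Since $\mathcal{M}(A)\subseteq\Gr(A)$ always, proving weak robustness reduces, via \cite[Proposition 1.4]{CTVJA}, to showing that no element of $\Gr(A)$ admits a proper strongly semiconformal decomposition.

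For an element $\ub=\cb_{is,it}\in Q_A$, I would argue directly: in any candidate decomposition $\ub=\vb+\wb$, the strict inequality $\ub^+>\vb^+$ applied to the single-coordinate positive part $\cb_{is,it}^+$ forces $\vb\le 0$, while $\ub^->\wb^-$ forces $w_{(i,t)}\ge 0$, hence $v_{(i,t)}\le-1$. Then $-\vb$ is a nonzero element of $\Ker_\ZZ(A)\cap\NN^{n+k}$, which is empty because $(1,\ldots,1)$ lies in the row span of $A$.

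The main step treats $\ub'\in\pi^{-1}(\ub)_+$ for $\ub\in\Gr(C)$. Assume a proper strongly semiconformal decomposition $\ub'=\vb'+\wb'$. The block-monotonicity of $\ub'$ (nonzero coordinates in block $i$ share the sign of $u_i$), combined with $\ub'^+>\vb'^+$ and $\ub'^->\wb'^-$, yields block-by-block bounds: if $u_i>0$ then $w'_{ij}\ge 0$ and $v'_{ij}\le u'_{ij}$; if $u_i<0$ then $v'_{ij}\le 0$ and $w'_{ij}\ge u'_{ij}$; if $u_i=0$ then $v'_{ij}\le 0$ and $w'_{ij}\ge 0$. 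Setting $\vb=\pi(\vb')$, $\wb=\pi(\wb')$ and summing these inequalities over each block, one verifies that $\ub=\vb+_{sc}\wb$ is a semiconformal decomposition in $\Ker_\ZZ(C)$.

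The main obstacle is ensuring this pushforward is proper, and this is precisely where the strict inequalities in the definition of strongly semiconformal are essential. Assuming $\vb=0$, the block bounds above together with $\sum_j v'_{ij}=0$ force $v'_{ij}=0$ whenever $u_i\le 0$ and $w'_{ij}\ge 0$ whenever $u_i>0$; a direct block-by-block check then gives $(\wb')^-=(\ub')^-$, contradicting the strict inequality $\ub'^->\wb'^-$. The symmetric hypothesis $\wb=0$ leads to $(\vb')^+=(\ub')^+$, contradicting $\ub'^+>\vb'^+$. Hence the pushforward $\ub=\vb+_{sc}\wb$ is a proper semiconformal decomposition, contradicting $\ub\in\mathcal{S}(C)$. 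The delicate point throughout is that properness at the $\Ker_\ZZ(A)$ level does not automatically descend under $\pi$; the strict-inequality requirement in the strongly semiconformal definition, together with the block-monotonicity built into $\pi^{-1}(\ub)_+$, is exactly what prevents a candidate decomposition from collapsing into a trivial redistribution of weight within a single block and is what makes the reduction to $\mathcal{S}(C)$ work.
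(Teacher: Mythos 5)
Your strategy is genuinely different from the paper's: the paper shows $\Gr(A)\subseteq\mathcal M(A)$ by writing $y^{\ub'_+}-y^{\ub'_-}$ as a combination of binomials of strictly smaller $A$-degree and specializing variables ($y_{ij}\mapsto x_i$ on the non-free blocks, $y_{ij}\mapsto 0$ on the free ones) to transport that identity down to the non-pyramidal core $C_{nf}$, whose strong robustness then gives the contradiction. You instead push a strongly semiconformal decomposition of $\ub'$ forward along $\pi$ to a semiconformal decomposition of $\ub\in\Gr(C)$ and contradict indispensability. Your block-by-block inequalities, the properness argument via the strict inequalities $\ub'^+>\vb'^+$, $\ub'^->\wb'^-$, and the treatment of $Q_A$ are all correct as far as they go, and this is an attractive, more combinatorial route.

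However, there is a gap. The characterization of $\mathcal M(A)$ from \cite[Proposition 1.4]{CTVJA} that you invoke is in terms of strongly semiconformal decompositions into \emph{possibly more than two} vectors --- the paper flags this explicitly just before citing it. Equivalently, $\ub'\notin\mathcal M(A)$ exactly when $\ub'^+$ and $\ub'^-$ lie in the same connected component of the fiber graph $G(F_{\ub'})$, i.e.\ are joined by a chain $\ub'^+=\zb_0,\zb_1,\ldots,\zb_l=\ub'^-$ of elements of the fiber with consecutive supports intersecting. A two-term decomposition $\ub'=\vb'+_{ssc}\wb'$ corresponds precisely to a chain of length $2$, i.e.\ to a single fiber element adjacent to both $\ub'^+$ and $\ub'^-$. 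So what you actually prove is that $\ub'^+$ and $\ub'^-$ have no common neighbour in $G(F_{\ub'})$; this does not exclude their being connected by a longer chain, in which case $\ub'$ would still fail to lie in $\mathcal M(A)$. The repair is to push the entire chain forward: $\pi$ maps the $A$-fiber of $\ub'$ into the $C$-fiber of $\ub$ (with $\pi(\ub'^+)=\ub^+$ and $\pi(\ub'^-)=\ub^-$ because $\ub'\in\pi^{-1}(\ub)_+$), and for nonnegative vectors $\supp(\zb_{i-1})\cap\supp(\zb_i)\neq\emptyset$ implies $\supp(\pi(\zb_{i-1}))\cap\supp(\pi(\zb_i))\neq\emptyset$; hence $\ub^+$ and $\ub^-$ are connected over $C$, so $\ub\notin\mathcal M(C)=\Gr(C)$, a contradiction. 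With that extension (and the analogous trivial check for $Q_A$, whose fibers have degree one), your argument closes.
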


\begin{proof}

If $A$ is non-pyramidal then we are done by  theorem \ref{four_sets}, since strongly robust implies weakly robust.

Otherwise, by applying \cite[Theorem 3.8]{BDR} since $X_A$
is self-dual, we obtain that the ground set of $A$, $C=\{\ab_{1},\ldots,\ab_n\}$, is $k$-pyramidal and, after a permutation, the vectors in $C_f=\{\ab_1,\ldots,\ab_k\}$ are free  and the configuration $C_{nf}=\{\ab_{k+1},\ldots,\ab_n\}$ is non-pyramidal with the corresponding variety $X_{C_{nf}}$ being self-dual.   

 Since $(1,1,\cdots, 1,1)$ belongs in the row span of the matrix $A$ then $\Ker_{\ZZ}(A)\cap \NN^n=\{{\mathbf{0}}\}$ which implies that $\mathcal M(A)\subset Gr(A)$, see \cite[Theorem 2.3]{CTVAC}.  

Next we prove that $\Gr(A)\subset\mathcal M(A)$.
Note that an element ${\bf u}'$ belongs in the universal Markov basis, $\mathcal M(A)$, 
if and only if the corresponding binomial $y^{{\bf u}'_+}-y^{{\bf u}'_-}$ 
cannot be written as a combination 
of binomials of $I_A$ of strictly smaller $A$-degree, see 
\cite[Section 2]{ChKTh} or 
\cite[Theorem 1.3.2]{DSS}  or \cite[Proposition 1.4]{CTVJA}. The $A$-degree of the binomial $y^{{\bf u}'_+}-y^{{\bf u}'_-}$ is $A{\bf u}'_+(=A{\bf u}'_-).$
Let ${\bf u}'\in\Gr(A)$ such that ${\bf u}'\notin\mathcal M(A)$. Then  we 
have $$y^{{\bf u}'_+}-y^{{\bf u}'_-}=\sum y^{{\bf m}_i}(y^{{{\bf u}_i}_+} -y^{{{\bf u}_i}_-})\in K[y_{11},\ldots,y_{1k_1+1},\ldots,y_{n1},\ldots,y_{nk_n+1}],$$ where $y^{{\bf m}_i}\not= 1$, for some ${\bf u}_i\in \Ker_{\ZZ}(A)$. 
Certainly no element in $Q_A$ can be written in this form, since all are of degree 1. Therefore  ${\bf u}'\in \pi^{-1}({\bf u})_+$ for some ${\bf u}\in Gr(C)$, by Theorem \ref{graver}.
Note that the vectors $\ab_1,\ldots,\ab_k$ are free therefore the first $k$ components are zero for every element 
${\bf v}$ in $\Ker_{\ZZ}(C)$, thus $supp({\bf v})\subset \{k+1,\cdots, n\}$. Let $p$ be the projection 
from $\ZZ^n$ to $\ZZ^{n-k}$ to the last $n-k$ components. Then $p({\bf v})\in \Ker_{\ZZ}(C_{nf})$ 
for every ${\bf v}\in \Ker_{\ZZ}(C)$ and for the ${\bf u}\in Gr(C)$ we have $p({\bf u})\in Gr(C_{nf})$,
since conformal decompositions in $\Ker_{\ZZ}(C_{nf})$ give conformal 
decompositions in $\Ker_{\ZZ}(C)$.

Then setting in the above equation $y_{ij}=x_{i}$ for $k+1\leq i\leq n$, $1\leq j\leq k_i+1$, and $y_{ij}=0$ for $1\leq i\leq k$, $1\leq j\leq k_i+1$, we have 
$$x^{{\bf u}_+}-x^{{\bf u}_-}=x^{p({\bf u})_+}-x^{p({\bf u})_-}=\sum x^{\pi({\bf m}_i)}(x^{p({\pi({{\bf u}_i}_+)})}-x^{p({\pi({{\bf u}_i}_-)})}) $$  in the ring $K[x_{k+1},\ldots,x_n]$, with all $x^{\pi({\bf m}_i)}\not= 1$. 
Therefore $p({\bf u})\notin\mathcal M(C_{nf})$.  Note that the sum with the $x$'s in the right part of the equation above may have less terms than the previous one with the $y$'s, but still has at least one, since the left part is not zero. 

The $X_{C_{nf}}$ is a non-pyramidal self-dual variety thus from Theorem \ref{four_sets} the ideal $I_{C_{nf}}$ is strongly robust, and so also weakly robust, thus $\Gr(C_{nf})=\mathcal M(C_{nf})$. 
But $p({\bf u})\in Gr(C_{nf})$ and  $p({\bf u})\notin\mathcal M(C_{nf})$, a contradiction.

We conclude that $\Gr(A)=\mathcal M(A)$.
\end{proof}
Next Theorem proves that for self-dual toric varieties the strongly robust property is equivalent with the property of being non-pyramidal. 
\begin{thm}\label{iff}
Let $A$ be any configuration such that $X_A$ is a self-dual projective toric variety.  The toric ideal $I_A$  is strongly robust if and only if $A$ is non-pyramidal. 
\end{thm}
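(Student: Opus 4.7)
The direction $A$ non-pyramidal $\Rightarrow I_A$ strongly robust is immediate from Theorem~\ref{four_sets}, so I will concentrate on the converse. My plan is to prove the contrapositive: if $A$ is pyramidal, then $I_A$ is not strongly robust. The strategy rests on the bouquet-theoretic characterization of strong robustness, \cite[Corollary 4.4]{PTV1}, which is exactly the result invoked (in one direction) inside the proof of Theorem~\ref{four_sets}.

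First I would observe that, because $A$ is pyramidal, $A$ contains at least one free vector, and the free vectors of $A$ assemble into the free bouquet of $A$. The dichotomy mixed/non-mixed recalled in Section~\ref{Section 2} is defined only for non-free bouquets, so the free bouquet sits outside the family of mixed bouquets. The criterion in \cite[Corollary 4.4]{PTV1} then asserts that strong robustness of $I_A$ is equivalent to every bouquet of $A$ being mixed; the presence of the free bouquet of $A$ therefore precludes strong robustness, which is exactly what we need.

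The principal subtlety will be to cite \cite[Corollary 4.4]{PTV1} for the converse implication (strong robustness $\Rightarrow$ all bouquets mixed), whereas the proof of Theorem~\ref{four_sets} only needed the forward implication. Should that converse be inconvenient to invoke directly, a fallback is to exploit the structural description provided by Theorem~\ref{converse_self_dual_GLM_pyramid}: the unrepeated free column of the underlying GLM matrix supplied there, combined with the circuits coming from a column repeated within the same GLM block, yields an explicit Graver element of $I_A$ admitting a proper semi-conformal decomposition, and by \cite[Proposition~1.1]{CTVJA} this is exactly a Graver element that is not indispensable, witnessing the failure of strong robustness.
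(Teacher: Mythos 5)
Your main route has a genuine gap: \cite[Corollary 4.4]{PTV1} is a one-way implication (all bouquets mixed $\Rightarrow$ strongly robust), which is the only direction the paper's Theorem~\ref{four_sets} uses; it is not an equivalence, and the converse you invoke is false in general. A free vector corresponds to a variable appearing in no binomial of $I_A$, so appending free columns to any strongly robust configuration leaves the ideal --- hence its strong robustness --- unchanged while creating a free, non-mixed bouquet. Thus the mere presence of the free bouquet cannot preclude strong robustness, and whether strong robustness forces the non-free bouquets to be mixed is exactly the kind of question left open and studied in \cite{KTV}. What actually breaks strong robustness in the pyramidal self-dual case is not the free vectors themselves but the repetitions that \cite[Theorem 3.8]{BDR} forces to accompany them (the ground set must be $k$-pyramidal with $k$ equal to the total number of repetitions).

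Your fallback points in the right direction --- exhibit a Graver element with a proper semiconformal decomposition, hence not indispensable by \cite[Proposition 1.1]{CTVJA} --- but as stated it does not identify a valid witness: free columns lie outside the support of every element of $\Ker_{\ZZ}(A)$, so they cannot be ``combined with circuits'' to produce a Graver element. The paper closes the converse by a case analysis on the multiplicities $k_i$ of the multiset $A$ over its ground set $C$, using Theorems~\ref{10} and~\ref{graver}: if some $k_i\geq 2$, the circuit ${\bf c}_{i1,i3}={\bf c}_{i1,i2}+_{sc}{\bf c}_{i2,i3}$ is a non-indispensable Graver element; if all $k_i\leq 1$, pyramidality of $A$ together with $\sum_i k_i=k$ forces some non-free $\ab_i$ of $C$ to be duplicated, and lifting a Graver element ${\bf u}$ of $C$ with $u_i\neq 0$ into $\pi^{-1}({\bf u})_+$ in two ways gives ${\bf u}'={\bf c}_{i1,i2}+_{sc}{\bf v}'$, again a non-indispensable Graver element. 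You would need to supply this (or an equivalent) explicit construction for the converse to go through.
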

\begin{proof} By Theorem \ref{four_sets} if $A$ is non-pyramidal then  $I_A$  is strongly robust. Suppose that $A$ is $s$-pyramidal with $s\ge 1$. Then the gound set $C$ is $k$-pyramidal $k\ge s$, by Theorem \ref{10}. 

Suppose that there exist $i$ such that $k_i>1$, that means the vector ${\bf a}_i$ appears at least three times in $A$.
Then the three circuits ${\bf c}_{i1,i2}$, ${\bf c}_{i1,i3}$, ${\bf c}_{i2,i3}$ by Theorem \ref{graver} belong to the Graver basis of $A$. Note that ${\bf c}_{i1,i3}={\bf c}_{i1,i2}+_{sc}{\bf c}_{i2,i3}$ is a proper semiconformal decomposition of ${\bf c}_{i1,i3}$, see introduction of Section \ref{4} or \cite[Proposition 1.1]{CTVJA}, thus ${\bf c}_{i1,i3}$ is not indispensable. But ${\bf c}_{i1,i3}$ is a circuit and therefore belongs to the Graver basis of $I_A$. For strongly robust ideals every Graver basis element is indispensable. Thus  $I_A$ is not 
strongly robust. 

Therefore all $k_i$ take the values one or zero. Since $A$ is a pyramidal configuration by Theorem \ref{10} there exist an $i$ such that ${\bf a}_i$ is non free in the $C$ configuration and $k_i=1$. Since ${\bf a}_i$ is non free vector in $C$ there
exist a ${\bf u}\in Gr(C)$ such that  the $i$-th component of ${\bf u}$ is different from zero, and thus w.l.o.g. we can assume that it is positive. Consider ${\bf u}'\in \pi^{-1}({\bf u})_+$ with $u'_{i1}=u_i$ and thus $u'_{i2}=0$, and ${\bf v}'\in \pi^{-1}({\bf u})_+$ with $v'_{i1}=u_i-1$ and thus $v'_{i2}=1$ then ${\bf u}'={\bf c}_{i1,i2}+_{sc}{\bf v}'$ is a semiconformal
decomposition of ${\bf u}'$ and it is proper since ${\bf c}_{i1,i2}\not ={\bf 0}$ and ${\bf u}'\in \pi^{-1}({\bf u})_+$ which implies that ${\bf u}'\not ={\bf c}_{i1,i2}$ and ${\bf v}'\not ={\bf 0}$.   Thus ${\bf u}'$  is not indispensable, see \cite[Proposition 1.1]{CTVJA}, but it is in the Graver of $A$, see Theorem \ref{graver}. Therefore  $I_A$ is not strongly robust. 
\end{proof}

\begin{ex} \label{Example A}
{\em Let $C_1$ be the matrix of the Example \ref{example2}. 
Let $E_1$ be the matrix in which we repeat once all first four columns of $C_1$ then Theorem \ref{10} says that the columns of $E_1$ form a non-pyramidal configuration. From Theorem \ref{graver} we have that there
are 29+4 elements in the Markov basis which is also universal Gr\"obner and Graver,
as was expected from Theorem \ref{four_sets} since it is non-pyramidal self dual.
The 29 elements have the first eight components zero and the rest are the components of the 29 elements of $I_{C}$, in the form $D(u)$ described above shifted by
eight places. The other four elements are the vectors $c_{11,12},c_{21,22}, c_{31,32}, c_{41,42}$ corresponding
to the binomials $y_{11}-y_{12}, y_{21}-y_{22}, y_{31}-y_{32}, y_{41}-y_{42}$.
}

\end{ex}
\begin{ex}\label{Example B}
{\em 

In this example let $E_2$ be a 16+4 multiset configuration with the ground set the set of columns of $C_1$  where we repeat four  times only the column 12 of $C_1$
(five columns in total identical with the 12-th column)
then we get a 4-pyramidal self dual toric variety according to Theorem \ref{10}. Theorem \ref{all_good} says that the universal
Markov basis and the Graver basis coincide and their elements are described in Theorem \ref{graver}.
They consist of 10 elements in the form $c_{ij,ik}$, where $i=12$ and $j,k$ take different values from 1 to 5.
The rest of the elements are the elements in the sets $\pi^{-1}({\bf u})_+$ where ${\bf u}$ takes the 29 values from the Graver basis of $I_{C_1}$.
Then if the absolute value of the 12-th component of a given vector ${\bf u}$ is $m$ there are \[ f(m) =
\left( \begin{array}{c}
m+4  \\
4
\end{array} \right)
\]
elements in the set $\pi^{-1}({\bf u})_+$, since this is the number of monomials in 5 variables of degree $m$.
There are 8 elements in the Graver basis of $I_{C_1}$ with the $12$-th component $0$, there are 
$8$  $(4, 4, 2, 2, 1)$ elements with the absolute value of the $12$-th component $2024$ $(4048, 6072, 8096, 10120, 14168$ correspondingly).
Therefore there are $$\omega =10 + 8f(0)+ 8f(2024)+4f(4048)+4f(6072)+2f(8096)+2f(10120)+f(14168)$$ elements in the Graver basis $I_{E_2}$, which is also universal Markov by Theorem \ref{all_good}.
From those we need only 33 (=29+4) elements in any minimal Markov basis, four from the 10 in the form $c_{ij,ik}$ and one from each fiber $\pi^{-1}({\bf u})_+$, ${\bf u}\in Gr(C_1)$.

 Let $G$ be a reduced Gr\"obner basis of $I_{E_2}$ with respect to a monomial order $>$.
 Let $y_{12,k}$ be the smallest variable among $y_{12,1}, y_{12,2}, y_{12,3}, y_{12,4}, y_{12,5}$, in the order $>$.
 Then all four elements $y_{12,i}-y_{12,k}$, where $i\not=k$, are in $G$.
 Therefore from each fiber  $\pi^{-1}({\bf u})_+$, such  that ${\bf u}$ has  $u_{12}$ component
 nonzero the only element that belongs to $G$ is the one that the 12th block is only $y_{12,k}^{u_{12}}$.
 Therefore from each fiber like that  $\pi^{-1}({\bf u})_+$ there are exactly five binomials that can appear in the universal  Gr\"obner basis  of $I_{E_2}$,
 in each of these five only one variable from $y_{12,i}$ appears. There are 21 elements  in the Graver basis of $I_{C_1}$ with the 12th $u_{12}$
 component nonzero therefore the cardinality of the universal  Gr\"obner basis is $10+8+5\cdot 21=123$.
 
 There is huge difference between the size of a Markov basis and the universal Markov basis (33 to $\omega$) but also between the size of the universal Gr\"obner basis and the Graver basis (123 to $\omega$).
 In $\omega$ only the $f(14168)$ is about $ 1.68 \cdot 10^{15}.$
 
 Strongly robust ideals have a unique Markov basis while the weakly robust ideals may have a huge number of different minimal Markov bases. Let $\Omega$ denote the number of different minimal Markov bases of $I_{E_2}$. Applying to  $I_{E_2}$ the formula of \cite[Theorem 2.9]{ChKTh} 
  for the number of different minimal Markov bases of a toric ideal $I_A$, we obtain $$\Omega=5^3f(0)^8 f(2024)^8f(4048)^4f(6072)^4f(8096)^2f(10120)^2f(14168).$$
Note that $5^3$ is the number of spanning trees on the complete graph on 5 vertices for the Betti fiber that
has vertices $y_{12,1}, y_{12,2}, y_{12,3}, y_{12,4}, y_{12,5}$. The other Betti fibers have $f(m)+1$ elements, for appropriate $m$, in two connected components, one with $f(m)$ elements  and the other with just one element, see \cite[Section 2]{ChKTh} or \cite[Theorem 1.3.2]{DSS} for details.}
\end{ex}

{\bf Acknowledgements.} The second author has been partially supported by the grant PN-III-P4-ID-PCE-2020-0029, within PNCDI III, financed by Romanian Ministry of Research and Innovation, CNCS - UEFISCDI.


\begin{thebibliography}{00}
\bibitem{BZ}  Bernstein, D., Zelevinsky, A., Combinatorics of maximal minors, J. Algebr. Comb. 2(2), 111--121 (1993).  

\bibitem{B} Boocher, A., Free resolutions and sparse determinantal ideals, Math. Res. Letters 19(4) 805--821 (2012).

\bibitem{BR} Boocher, A., Robeva, E., Robust toric ideals, J. Symbolic Comput.,  68(1), 254--264 (2015).

\bibitem{BBDLMNS} Boocher, A., Brown, B. C., Duff, T., Lyman, L. ,  Murayama, T.,  Nesky, A., and Schaefer, K., Robust graph ideals, Ann.  Comb. 19(4),  641--660 (2015).

\bibitem{BDR} M. Bourel, A. Dickenstein, A. Rittatore,  Self-dual projective toric varieties, J. London Math. Soc. (2) {\bf 84} (2011), 514--540.

\bibitem{D5} E. Cattani, A. Dickenstein, Non-splitting Flags, iterated circuits, $g$-Matrices and Cayley configurations, Vietnam Journal of Mathematics 50 (3) 679--706 (2022).
 \bibitem{ChKTh} H.\ Charalambous, A.\ Katsabekis, A.\ Thoma,  Minimal systems of binomial generators and the indispensable complex of a toric ideal, Proc. Amer. Math. Soc. 135, 3443--3451 (2007). 
\bibitem{CTVJA} H. Charalambous, A. Thoma, M. Vladoiu,  Markov complexity of monomial curves,  J. Algebra {\bf 417}  391--411 (2014).
\bibitem{CTVAC} H.\ Charalambous, A.\ Thoma, M.\ Vladoiu, Markov bases and generalized Lawrence liftings, Annals of Combinatorics 19(4),  661--669 (2015).



\bibitem{cng}  Conca, A., De Negri, E., Gorla, E.,  Universal Gr\" obner bases for maximal minors, International Mathematics Research Notices 11, 3245--3262 (2015).

\bibitem{CNG} Conca, A., De Negri, E., Gorla, E., Cartwright-Sturmfels ideals associated to graphs and linear spaces. J. Comb. Algebra 2 (3), 231--257 (2018).
\bibitem{D2} Curran, R., Cattani, E., Restriction of $A$-Discriminants and dual defect toric varieties, J. Symbolic Comput. 42 (1-2) 115--135 (2007).
\bibitem{D1}  Dickenstein, A.,  Feichtner, E.M.,  Sturmfels, B., Tropical Discriminants, J. Amer. Math. Soc. 20 (4), 1111--1133 (2007).

\bibitem{DS} P.\ Diaconis, B.\ Sturmfels, Algebraic algorithms for sampling from
conditional distributions, Annals of Statistics  26(1), 363--397 (1998).
\bibitem{DSS} M.\ Drton, B.\ Sturmfels, S.\ Sullivant,  Lectures on algebraic statistics, Oberwolfach Seminars, 39. Birkh\"auser Verlag, Basel, viii+171 pp (2009). 

\bibitem{D3} Esterov, A., Characteristic classes of affine varieties and Pl{\" u}cker formulas for affine morphisms, J. Eur. Math. Soc. 20 (1) 15--59 (2018).
\bibitem{D4} K. Furukawa, A. Ito, Combinatorial description of Dual Defects of Toric Varieties, Commun. Contemp. Math. 23(1) 2050001, 31pp. (2021). 

\bibitem{G-MT} Garcia-Marco, I., Tatakis, Ch., On robustness and related properties on Toric ideals,  J. Algebr. Comb. 57, 21–52 (2023).
\bibitem{GP}  Gross, E., Petrovi{\' c}, S., Combinatorial degree bound for toric ideals of hypergraphs, Internat. J. Algebra Comput. 23 (6) 1503--1520 (2013). 
\bibitem{HS} S.\ Ho\c sten, S.\ Sullivant, A finiteness theorem for Markov bases of hierarchical models, J. Combin. Theory Ser. A 114, 311--321  (2007).
\bibitem{KTV} D. Kosta, A. Thoma, M. Vladoiu, On the strongly robust property of toric ideals, J. Algebra 616, 1--25 (2023).
\bibitem{KTV2} D. Kosta, A. Thoma, M. Vladoiu, The strongly robust simplicial complex of monomial curves, arXiv:2305.11743.



\bibitem{PStasi}  S. Petrovi{\' c}, D. Stasi,  Toric algebra of hypergraphs, J. Algebr. Comb. 39, 187–-208 (2014). 
\bibitem{PTV1} S. Petrovi{\' c}, A. Thoma, M. Vladoiu,  Bouquet algebra of toric ideals, Journal of Algebra, {\bf 512} 493--525  (2018).
\bibitem{PTV2}  S. Petrovi{\' c}, A. Thoma, M. Vladoiu,  Hypergraph encodings of toric ideals,  J. Combin. Theory Ser. A, {\bf 166} 11--41 (2019).
\bibitem{St} B. Sturmfels,   Gr{\"o}bner Bases and Convex Polytopes, University Lecture Series,  No. 8 American Mathematical Society Providence,  R.I. (1995).

\bibitem{SZ}  Sturmfels, B., Zelevinsky, A., Maximal minors and their leading terms, Adv. Math. 98(1), 65--112 (1993). 
\bibitem{Sul} S. Sullivant,  Strongly robust toric ideals in codimension 2, Journal of Algebraic Statistics, {\bf 10} 128--136 (2019). 

\bibitem{T}  Tatakis, Ch., Generalized robust toric ideals, J. Pure Appl. Algebra 220, 263--277 (2016).

\end{thebibliography}
\end{document}